
\documentclass[12pt, paperwidth=5.35in, paperheight=8.3in]{amsart}

\pdfpagewidth=\paperwidth
\pdfpageheight=\paperheight

\newtheorem{theorem}{Theorem}[section]
\newtheorem{lemma}[theorem]{Lemma}

\theoremstyle{definition}

\theoremstyle{remark}
\newtheorem{remark}[theorem]{Remark}
\newtheorem{corollary}[theorem]{Corollary}
\newtheorem{proposition}[theorem]{Proposition}
\numberwithin{equation}{section}

\allowdisplaybreaks
\usepackage{amsmath,amssymb,amscd,amsthm,amsxtra}
\usepackage[colorlinks=true, pdfstartview=FitV, linkcolor=blue, citecolor=blue, urlcolor=blue]{hyperref}
\usepackage{color}
\usepackage{graphicx,epstopdf,verbatim}


\def\BBR {{\mathbb R}}
\def\BBC {{\mathbb C}}

\newcommand {\miw}{  m_Iw }
\newcommand {\mjw}{  m_Jw }

\newcommand {\mlw}{  m_Lw }
\newcommand {\mkw}{  m_Kw }

\newcommand {\mlwi}{ m_L w^{\scriptscriptstyle{-1}}}
\newcommand {\mkwi}{ m_K w^{\scriptscriptstyle{-1}}}

\newcommand {\diw}{\frac{|\Delta_I w|}{m_I w}}

\newcommand {\dkw}{\frac{|\Delta_K w|}{m_K w}}

\newcommand {\dkwi}{\frac{|\Delta_K w^{-1}|}{m_K w^{\scriptscriptstyle{-1}}}}

\newcommand {\diws}{\frac{|\Delta_I w|^2}{(m_I w)^2}}
\newcommand {\dius}{\frac{|\Delta_I u|^2}{(m_I u)^2}}
\newcommand {\diwis}{\frac{|\Delta_I w^{-1}|^2}{(m_I w^{\scriptscriptstyle{-1}})^2}}

\newcommand{ \diwisp} {\frac{|\Delta_I w^{-\frac{1}{p-1}}|^2}{(m_I w^{\scriptscriptstyle{-\frac{1}{p-1}}})^2}}
\newcommand {\dkws}{\frac{|\Delta_K w|^2}{(m_K w)^2}}
\newcommand {\dkwis}{\frac{|\Delta_K w^{-1}|^2}{(m_K w^{\scriptscriptstyle{-1}})^2}}

\newcommand {\wi}{w^{\scriptscriptstyle{-1}}}
\newcommand {\ap}{A^d_p}

\newcommand {\rhp}{RH^d_p}

\begin{document}
\title{{\sc Sharp bounds for $t$-Haar multipliers on $L^2$ }}

\author[O. BEZNOSOVA]{Oleksandra Beznosova}
\address{Department of Mathematics, Baylor University,
One Bear Place \#97328,Waco, TX 76798-7328, USA}
\email{Oleksandra$_-$Beznosova@baylor.edu}
\author[J.C. MORAES]{JEAN CARLO MORAES} \thanks{The first author was supported by  fellowship CAPES/FULBRIGHT, 2918-06/4}
\address{Universidade Federal de Pelotas - UFPEL, Centro de Engenharias - CENG,
Almirante Barroso 1734, sala 16, Pelotas, RS, Brasil}
\email{jmoraes@unm.edu}
\author[ M.C. Pereyra]{MAR\'{I}A CRISTINA PEREYRA}
\address{Department of mathematics and Statistics, 1 University of New Mexico, Albuquerque, NM 87131-001, USA}
\email{crisp@math.unm.edu}
\subjclass[2010]{Primary 42C99 ; Secondary 47B38}
\keywords{ $A_p$-weights, Haar multipliers, complexity.} \maketitle

\setcounter{equation}{0} \setcounter{theorem}{0}


\begin{abstract}
We  show that  if a weight $w\in C^d_{2t}$ and there is
$q >1$ such that $w^{2t}\in A_q^d$, then
the $L^2$-norm of the $t$-Haar multiplier of complexity $(m,n)$ associated to $w$ depends on the
square root of the $C^d_{2t}$-characteristic of $w$ times  the square root $A^d_q$-characteristic of $w^{2t}$ 
times a constant that depends polynomially on the complexity. In particular,
if  $w\in C^d_{2t}\cap A_{\infty}^d$ then $w^{2t}\in A_q^d$ for some $q>1$.
\end{abstract}

\section{Introduction}


 Recently Tuomas Hyt\"onen  settled the \emph{$A_2$-conjecture} \cite{H}:
 for {\em all}  Calder\'on-Zygmund integral singular operators $T$ in $\mathbb{R}^N$, weights $w\in A_p$,
 there is $C_{p,N,T} >0$ such that,
 \[ \|Tf\|_{L^p(w)}\leq C_{p,N,T} [w]_{A_p}^{\max\{1,\frac{1}{p-1}\}}\|f\|_{L^p(w)}.\]
In his proof he developed and used  a representation valid for  \emph{any} 
Calder\'on-Zygmund operator as an average of Haar shift operators of \emph{arbitrary complexity}, paraproducts and their adjoints.
 See \cite{L1,P4} for surveys of the $A_2$-conjecture. An important and hard part of the proof was to obtain bounds for Haar shifts operators that depended
linearly in the $A_2$-characteristic and at most polynomially in the
complexity. 
 
In this paper we show that  if a weight $w\in C^d_{2t}\cap A_{\infty}^d$, then 
the $L^2$-norm of the $t$-Haar multiplier of complexity $(m,n)$ associated to $w$ depends on the
square root of the $C^d_{2t}$-characteristic of $w$ times  the square root
 $A^d_q$-characteristic of $w^{2t}$ for some $q>1$ depending on $t\in\BBR$ 
times a constant that depends polynomially on the complexity.

For $t\in\mathbb{R}$, $m,n\in \mathbb{N}$, and  a weight $w$,  the {\em $t$-Haar multiplier
of complexity $(m,n)$} was introduced in \cite{MoP}, and   is defined formally by
\[ T_{t,w}^{m,n}f(x)= \sum_{L\in\mathcal{D}} \sum_{I\in\mathcal{D}_m(L), J\in\mathcal{D}_n(L)} \frac{\sqrt{|I|\, |J|}}{|L|}\, \frac{w^t(x)}{(m_Lw)^t}\langle f,h_I\rangle h_J(x),\]
 where $\mathcal{D}$ denotes the dyadic intervals, $|I|$  the length of interval $I$,
 $\mathcal{D}_m(L)$ denotes the dyadic subintervals of $L$  of length $2^{-m}|L|$,
  $h_I$ is a Haar function associated to $|I|$, and $\langle f,g\rangle$ denotes the $L^2$-inner product.

When $(m,n)=(0,0)$ we denote the corresponding Haar multiplier by $T^t_w$, and, if in adittion $t=1$, simply $T_w$.
The Haar multipliers $T_w$  are closely related to the resolvent of the dyadic paraproduct \cite{P}, and appeared
in the study of Sobolev spaces on Lipschitz curves \cite{P3}. 

 A necessary condition for the boundedness
of $T_{w,t}^{m,n}$ on $L^2(\BBR^N)$ is that $w\in C^d_{2t}$, see \cite{MoP}, that is,
\[[w]_{C^d_{2t}}:=\sup_{I\in\mathcal{D}} \Big (\frac{1}{|I|}\int_I w^{2t}(x)dx \Big )\Big (\frac{1}{|I|}\int_I w(x)dx\Big )^{-2t}<\infty.\] 
This condition is also sufficient for $t<0$ and $t\geq 1/2$.
Notice that for $0\leq t < 1/2$ the condition $C^d_{2t}$ is always fulfilled; in this case, boundedness of $T^{m,n}_{w,t}$ is known when $w\in A_{\infty}^d$ 
\cite{MoP, KP}. 
 The first author showed in \cite[Chapter 5]{Be}, that if $w\in C^d_{2t}$ and $w^{2t}\in A_q^d$ then the $L^2$-norm of  $T^t_w$,
  is bounded by a constant times ${[w]^{\frac12}_{C_{2t}}[w^{2t}]^{\frac12}_{A_q^d}}$. 
Here we present a different proof of this result
that  holds for  $t$-Haar multipliers of complexity $(m,n)$ with polynomial dependence on the complexity.

\begin{theorem}\label{thm:oleksandra} Let $w\in C^d_{2t}$ and assume there is $q>1$ such that  $w^{2t}\in A_q^d$, then
there is a constant $C_q >0$ depending
only on $q$, such that 
\[ \|T_{t,w}^{m,n}f\|_2\leq C_q(m+n+2)^3[w]_{C^d_{2t}}^{\frac{1}{2}}[w^{2t}]^{\frac{1}{2}}_{A^d_q}\|f\|_2.\]
\end{theorem}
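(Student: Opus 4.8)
The plan is to bound the bilinear form $\langle T_{t,w}^{m,n}f, g\rangle$ by duality, reducing the operator-norm estimate to a sum over the stopping-tree/collection of dyadic intervals that can be controlled by a Carleson-type argument. Writing out the inner product,
\[
\langle T_{t,w}^{m,n}f, g\rangle
=\sum_{L\in\mathcal D}\frac{1}{(m_Lw)^t}
\sum_{\substack{I\in\mathcal D_m(L)\\ J\in\mathcal D_n(L)}}
\frac{\sqrt{|I|\,|J|}}{|L|}\,\langle f,h_I\rangle\,\langle w^t g,h_J\rangle,
\]
so the task is to absorb the factor $w^t$ into the Haar coefficient $\langle w^tg,h_J\rangle$ and estimate the resulting trilinear sum. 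The natural device is to pass to the weight $v:=w^{2t}$ and its companion measure, using that $v\in A_q^d$, and to renormalize the Haar functions against $v$ (and against Lebesgue measure for $f$). I would first record the elementary geometric fact that, because $I\in\mathcal D_m(L)$ and $J\in\mathcal D_n(L)$, the averages $m_Lw$, $m_Iw$ and $m_Jw$ are comparable up to factors controlled by the $C^d_{2t}$- and $A_q^d$-characteristics raised to a power linear in $m$ and $n$; this is where the polynomial-in-complexity loss $(m+n+2)^3$ will originate.

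The main engine will be a weighted Carleson embedding theorem together with the $\alpha$-lemma / little lemma for $A_\infty$ weights (the standard Beznosova-type lemma stating that for $w\in A_\infty^d$ the sequence $\{|\Delta_I w|^2/(m_Iw)^2\,|I|\}$ forms a Carleson sequence with intensity controlled by $\log[w]_{A_\infty^d}$, and more precisely the quantitative version giving $A_q^d$ control). Concretely, I expect to expand $w^t$ on the scale of $L$ using the martingale difference telescoping between level $L$ and the level of $J$, producing martingale increments $\Delta_K w^t$ for $K$ running through the $n$ generations between $L$ and $J$; each increment is estimated by $|\Delta_K w^t|\lesssim (m_Kv)^{1/2}(\dkws)^{1/2}$-type bounds after Cauchy–Schwarz, and the square-function sums $\sum_K \dkws\,|K|$ over a tree are Carleson by the lemma. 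Splitting the telescoped sum, applying Cauchy–Schwarz in $I$, $J$, and $K$ separately, and invoking the Carleson embedding theorem on each piece should collapse everything into $\|f\|_2\|g\|_2$ with the advertised constants, the $A_q^d$-characteristic of $v=w^{2t}$ entering through the Carleson intensities and the $C^d_{2t}$-characteristic entering through the normalization $1/(m_Lw)^t$.

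The principal obstacle, and the step I would spend the most care on, is the bookkeeping of the complexity: each of the $m$ (resp. $n$) generations separating $I$ (resp. $J$) from the top cube $L$ contributes a telescoping layer, and a naive application of Cauchy–Schwarz at every generation would produce exponential rather than polynomial dependence on $m+n$. The right move is to sum the geometric-like series in the generation index using the doubling/$A_q^d$ comparability of consecutive averages, so that the per-generation factors combine into a single polynomial factor; controlling this sum so that it yields exactly the cube $(m+n+2)^3$ (one power from each of the three Cauchy–Schwarz applications, crudely speaking) rather than a worse polynomial is the delicate quantitative point. A secondary technical issue is ensuring that the decomposition does not require $w$ itself to be in $A_\infty^d$ but only $w^{2t}\in A_q^d$ together with $w\in C^d_{2t}$; one must check that every invocation of the Carleson lemma is applied to the weight $v=w^{2t}$ (which is $A_q^d$, hence $A_\infty^d$) and that the $C^d_{2t}$ hypothesis is precisely what converts the $v$-averages back into the $w^t$-weighted quantities appearing in the operator.
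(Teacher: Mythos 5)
Your opening moves (duality, absorbing $w^t$ into the coefficient $\langle gw^t,h_J\rangle$, passing to $v=w^{2t}\in A_q^d$, and feeding the oscillation sums into a weighted Carleson embedding, with $[w]_{C^d_{2t}}^{1/2}$ entering through the ratio $(m_Lw^{2t})^{1/2}/(m_Lw)^t$) do match the paper's strategy. But the mechanism you propose for the polynomial dependence on $(m,n)$ is precisely the step that fails, and it is the heart of the theorem. You claim that $m_Lw$, $m_Iw$, $m_Jw$ are comparable ``up to factors controlled by the $C^d_{2t}$- and $A_q^d$-characteristics raised to a power linear in $m$ and $n$,'' and later that the per-generation losses from telescoping can be resummed ``using the doubling/$A_q^d$ comparability of consecutive averages'' into a single polynomial factor. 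Neither claim is tenable: consecutive-generation comparability for a doubling weight costs a fixed multiplicative constant per generation (the paper itself records $D(w)^{-1}\leq w(I)/w(\widehat{I}\,)\leq 1-D(w)^{-1}$ with $D(w)\geq 2$), so comparing the level-$L$ average with averages at depth $m$, or resumming your ``geometric-like series'' over $m+n$ generations, produces factors of the form $D(w)^{m}$ or $[w^{2t}]_{A_q^d}^{c(m+n)}$ --- exponential in the complexity. Moreover a bound with the characteristic raised to a power growing in $m+n$ is incompatible with the statement, which has exactly $[w]_{C^d_{2t}}^{1/2}[w^{2t}]_{A_q^d}^{1/2}$ times a polynomial. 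You correctly flag this as the delicate point, but the repair you sketch is the naive one that does not work.

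What the paper does instead, and what is missing from your outline, is threefold. First, rather than telescoping $w^t$ across the generations between $L$ and $J$, it splits $h_J=\alpha_Jh_J^{w^{2t}}+\beta_J\chi_J/\sqrt{|J|}$ (the weighted/unweighted Haar comparison), so the bilinear form breaks into $\Sigma_1$, handled with no complexity loss by orthonormality of $\{h_J^{w^{2t}}\}$ in $L^2(w^{2t})$, and $\Sigma_2$, which carries the weight oscillation $|\Delta_J(w^{2t})|/m_Jw^{2t}$ at the single scale $J$ --- no chain of increments ever appears. Second, the complexity enters $\Sigma_2$ through the Lift Lemma stopping time $\mathcal{ST}^m_L$, whose threshold is calibrated to the complexity: one stops when the accumulated relative oscillation of $w^{2t}$ and $w^{-2t/(q-1)}$ exceeds $1/(m+n+2)$ or at depth $m$, so along any non-stopped chain of at most $m$ generations the averages move by at most $(1+1/(m+n+2))^{m}\leq e$, an absolute constant. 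This is the device that replaces your exponential comparability loss; its price is only the factor $(m+1)$ in the intensity of the lifted Carleson sequence $\eta^m_L$ and one factor $m+n+2$ in the estimate for $R^{w^{2t},n}_L$. Third, the last power of $m+n+2$ comes from a deliberate choice of exponent, $p=2-(m+n+2)^{-1}$: after the $A_p$-Little Lemma converts $\eta^m_L/(m_Lw^{-2t/(q-1)})^{q-1}$ into a $w^{2t}$-Carleson sequence and the Weighted Carleson Lemma is applied with $F=\big(M_{w^{2t}}(|gw^{-t}|^p)\big)^{2/p}$, the bound $\|M_{w^{2t}}\|_{L^{2/p}(w^{2t})}\lesssim(2/p)'=2(m+n+2)$ supplies the third factor. (So the three powers are not ``one per Cauchy--Schwarz'': they come from the stopping threshold, the Carleson-intensity lift, and the maximal-function exponent.) Without these ingredients --- above all the stopping time with threshold scaling like $1/(m+n+2)$ --- your decomposition cannot yield the claimed $(m+n+2)^3$ bound.
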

When $w^{2t}\in A_2^d$, this was proved in  \cite{MoP}. 

 Using known properties of weights we can replace the condition 
 $w^{2t}\in A_q^d$, by what may seem to be a more natural condition 
$w\in C_{2t}^d\cap A_{\infty}^d$.
 
\begin{theorem}
Let $w\in C^d_{2t}\cap A_{\infty}^d$, then 
\begin{itemize}

\item[(i)] if $0\leq 2t < 1$, there is $q>1$ such that $w\in A_q^d$, then $w^{2t}\in A_q^d$, and
 \[\|T_{t,w}^{m,n}f\|_2\leq C_q(m+n+2)^3[w^{2t}]^{\frac{1}{2}}_{A^d_q}\|f\|_2\leq C_q(m+n+2)^3[w]^{t}_{A^d_q}\|f\|_2.\]

\item[(ii)] If  $2t \geq 1$ and $w\in A^d_p$ then for $q=2t(p-1)+1$, $w^{2t}\in A_q^d$, and
\[\|T_{t,w}^{m,n}f\|_2\leq C_q(m+n+2)^3[w]_{C^d_{2t}}^{\frac{1}{2}}[w^{2t}]^{\frac{1}{2}}_{A^d_q}\|f\|_2\leq C_p(m+n+2)^3[w]_{C^d_{2t}}[w]_{A^d_p}\|f\|_2.\]

 \item[(iii)] If  $t<0$ then for $q= 1-2t$,  $w^{2t}\in A_q^d$, and the bound becomes linear in the $C_{2t}^d$ characteristic of $w$,
\[\|T_{t,w}^{m,n}f\|_2\leq C (m+n+2)^3[w]_{C^d_{2t}}\|f\|_2.\]

 \end{itemize}

\end{theorem}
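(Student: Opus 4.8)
The plan is to derive all three parts as consequences of Theorem~\ref{thm:oleksandra}: in each regime it suffices to exhibit an exponent $q>1$ for which $w^{2t}\in A^d_q$ and to estimate $[w^{2t}]_{A^d_q}$ in terms of the quantities appearing on the right-hand side, after which the bound on $\|T^{m,n}_{t,w}f\|_2$ is immediate. The only external input is the classical characterization $A^d_\infty=\bigcup_{p>1}A^d_p$, so that the hypothesis $w\in A^d_\infty$ already furnishes some $p>1$ with $w\in A^d_p$; this is exactly how the abstract assumption ``$w^{2t}\in A^d_q$'' of Theorem~\ref{thm:oleksandra} is traded for the more transparent ``$w\in C^d_{2t}\cap A^d_\infty$''.

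The heart of the matter is a single computation performed three times. Writing $[w^{2t}]_{A^d_q}=\sup_I (m_I w^{2t})\big(m_I (w^{2t})^{-1/(q-1)}\big)^{q-1}$, I would in each case choose $q$ precisely so that the dual power $(w^{2t})^{-1/(q-1)}=w^{-2t/(q-1)}$ collapses to a weight already under control. For $t<0$ the choice $q=1-2t$ gives $q-1=-2t>0$, hence $(w^{2t})^{-1/(q-1)}=w$ and therefore $[w^{2t}]_{A^d_q}=\sup_I (m_I w^{2t})(m_I w)^{-2t}=[w]_{C^d_{2t}}$ \emph{exactly}; note this uses only $w\in C^d_{2t}$, not the $A^d_\infty$ hypothesis. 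For $2t\ge 1$ and $w\in A^d_p$ the choice $q=2t(p-1)+1$ makes $q-1=2t(p-1)$, so $(w^{2t})^{-1/(q-1)}=w^{-1/(p-1)}$; combining $m_I w^{2t}\le [w]_{C^d_{2t}}(m_I w)^{2t}$ with the $A^d_p$ definition gives $[w^{2t}]_{A^d_q}\le [w]_{C^d_{2t}}[w]_{A^d_p}^{2t}$. For $0\le 2t<1$ I would keep $q$ equal to the exponent supplied by $w\in A^d_p$ and invoke Jensen's inequality (concavity of $x\mapsto x^{2t}$) twice: $m_I w^{2t}\le (m_I w)^{2t}$ for the weight, and, writing $u=w^{-1/(q-1)}$, $\big(m_I u^{2t}\big)^{q-1}\le\big((m_I u)^{q-1}\big)^{2t}$ for its dual, whence $[w^{2t}]_{A^d_q}\le [w]_{A^d_q}^{2t}$.

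With these embeddings in hand I would assemble the three estimates by substitution into Theorem~\ref{thm:oleksandra}. In case (iii), $[w^{2t}]_{A^d_q}=[w]_{C^d_{2t}}$ turns the product $[w]_{C^d_{2t}}^{1/2}[w^{2t}]_{A^d_q}^{1/2}$ into $[w]_{C^d_{2t}}$, yielding the asserted linear dependence with $C=C_q$ a constant depending only on $t$ (since $q=1-2t$). In case (i) the same Jensen estimate shows $[w]_{C^d_{2t}}\le 1$, so that factor drops out of Theorem~\ref{thm:oleksandra} and one is left with $[w^{2t}]_{A^d_q}^{1/2}\le [w]_{A^d_q}^{t}$. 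In case (ii) the embedding gives $[w]_{C^d_{2t}}^{1/2}[w^{2t}]_{A^d_q}^{1/2}\le [w]_{C^d_{2t}}[w]_{A^d_p}^{t}$, which is the claimed estimate.

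Since the analytic substance resides entirely in Theorem~\ref{thm:oleksandra}, this result is really weight-theoretic packaging, and I expect the delicate points to be bookkeeping rather than substance: one must verify that the chosen $q$ exceeds $1$ in each regime (immediate from $t<0$, from $p>1$ with $2t\ge1$, and from $w\in A^d_\infty$), and that the two Jensen steps are applied in the correct direction on the concave range $0<2t<1$. The main obstacle, such as it is, is ensuring the power-weight manipulations hold uniformly in $I$ so that the suprema defining the characteristics transfer cleanly; once the pointwise-in-$I$ inequalities above are in place, taking suprema and invoking Theorem~\ref{thm:oleksandra} is routine.
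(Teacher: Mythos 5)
Your proposal is correct and takes essentially the same route as the paper: the paper's proof of this theorem consists of applying Lemma~\ref{lem:Csvsws} (itself an item-by-item application of the known weight properties in Lemma~\ref{lem:ApRHq}) to produce $q>1$ with $w^{2t}\in A^d_q$ and then invoking Theorem~\ref{thm:oleksandra}, and your three case-by-case embeddings --- $[w^{2t}]_{A^d_{1-2t}}=[w]_{C^d_{2t}}$ for $t<0$, $[w^{2t}]_{A^d_{2t(p-1)+1}}\leq [w]_{C^d_{2t}}[w]_{A^d_p}^{2t}$ for $2t\geq 1$, and $[w^{2t}]_{A^d_q}\leq [w]_{A^d_q}^{2t}$ for $0\leq 2t<1$ --- are precisely the content of those lemmas, which you re-derive by hand via Jensen's inequality and direct manipulation of the characteristics rather than cite. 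One caveat, which is an inconsistency internal to the paper rather than a flaw in your argument: in case (ii) your computation yields the factor $[w]_{C^d_{2t}}[w]_{A^d_p}^{t}$, exactly as in the paper's own Theorem~\ref{thm:refinement}(ii), whereas the statement above writes $[w]_{C^d_{2t}}[w]_{A^d_p}$; since $[w]_{A^d_p}\geq 1$, the stated exponent-one bound does not follow from this argument when $t>1$.
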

The result was known to be  optimal when $t=\pm 1/2$ \cite{Be, P2}.  The bound in (ii)  is not optimal 
since for $t=1$, the $L^2$ norm of $T_w$  is bounded by a constant times $[w]_{C^d_2}D(w)$, where $D(w)$ is  
the doubling constant of $w$,  see \cite{P2}. Here we get the larger norm  $C [w]_{C^d_2}[w]_{A_p^d}$.


To prove this theorem we modify the argument in \cite{MoP} that works when $w\in A_2^d$ ($p=2$). In particular 
we need a couple of new $A_p$-weight lemmas
that are proved using Bellman function techniques: the $A_p$-Little Lemma,  and the $\alpha\beta$-Lemma. 

A few open questions remain. In  case (i) $0<2t<1$, is $w^{2t}\in A_{\infty}^d$ a necessary condition for the
boundedness of $T^{m,n}_{tw}$? Here we show is sufficient.
 Is it possible to get an estimate independent of $q>1$  such that $w^{2t}\in A_q^d$? More specifically, can we replace
$C_q[w^{2t}]_{A_q^d}^{1/2}$ by $C [ w^{2t}]_{A_{\infty}^d}$? or even better by $C D(w)$?
Similarly in case (ii).

The paper is organized as follows. In Section 2 we provide the basic definitions and basic results that are
used through-out this paper. In  Section 3 we  prove the lemmas that are essential for  the main result. 
In Section 4 we  prove the main estimate for the $t$-Haar multipliers with complexity $(m,n)$.
In the Appendix we prove the $A_p$-Little Lemma.

\section{Preliminaries}\label{preliminaries}

\subsection{Weights, maximal function  and dyadic intervals}
 A \emph{weight} $w$ is a locally integrable function in
$\mathbb{R}$  positive almost everywhere.
The $w$-measure of a measurable set $E$, denoted by $w(E)$, is
$ w(E)= \int_E w(x)dx.$
For a measure $\sigma$, $ \sigma(E) = \int_{E} d\sigma$, and $|E|$
stands for the Lebesgue measure of $E$. We define $m^{\sigma}_E f$ to be the
integral average of $f$ on $E$, with respect to $\sigma$,
$$m^{\sigma}_E f := \frac{1}{\sigma(E)} \int_E f(x)
d\sigma.$$
When $dx=d\sigma$ we simply write $m_Ef$, when $d\sigma = v\,dx$ we write $m_E^v f$.

Given a weight $w$, a measurable function $f: \BBR^N\to\BBC$ is 
 in $L^p(w)$ if and only if $\|f\|_{L^p(w)}:= \left (\int_{\mathbb{R}} |f(x)|^pw(x)dx \right )^{1/p}<\infty$.

For a weight $v$ we define the {\em weighted maximal function of $f$} by
$$ (M_vf)(x) = \sup_{I: x\in I} m_I^v |f|$$
where $I$ is a cube in $\BBR^N$ with sides parallel to the axis.
The operator  $M_v$ is bounded in $L^p(v)$ for all $p>1$ and
furthermore
\begin{equation}\label{bddmaxfct}
\|M_v f\|_{L^p(v)} \leq C p' \|f\|_{L^q(v)},
\end{equation}
\noindent where  $p'$ is the dual exponent of
$p$, that is ${1}/{p} + {1}/{p'}=1$. A
proof of this fact can be found in \cite{CrMPz1}. When $v=1$, $M_v$ is the usual   Hardy-Littlewood maximal function, which we will denote by $M$.
It is well-known that $M$ is bounded in $L^p(w)$ if and only if $w\in A_p$ \cite{Mu}.

The collection of all  {\em dyadic intervals}, $\mathcal{D}$,  is given by:
$\mathcal{D}= \cup_{n\in\mathbb{Z}}\mathcal{D}_n$, where $\mathcal{D}_n:=\{I \subset \mathbb{R} \; : \; I=[k2^{-n}, (k+1)2^{-n} ), \; k \in \mathbb{Z}\}.
$
For a dyadic interval $L$ , let $\mathcal{D}(L)$ be  the collection of its dyadic subintervals,
$\mathcal{D}(L):=\{I \subset L \; : \; I \in \mathcal{D}\} ,$
 and let $\mathcal{D}_n(L)$ be the $n^{th}$-generation of dyadic subintervals of $L$,
$\mathcal{D}_n(L):=\{I \in \mathcal{D}(L) \; : \; |I|=2^{-n}|L| \}.$

For every dyadic interval $I \in
\mathcal{D}_n$ there is exactly one $\widehat{I} \in
\mathcal{D}_{n-1}$, such that $I  \subset \widehat {I}$,
$\widehat{I}$ is called the {parent of $I$}. 
Each dyadic interval $I$ in $\mathcal{D}_n$ has
two children in $\mathcal{D}_{n+1}$, the right and left halves, denoted $I_+$ and $I_-$ respectively.

A weight $w$ is {\em dyadic doubling} if  ${w(\widehat{I})}/{w(I)} \leq C\;$ for all $\; I \in \mathcal{D}$.
The smallest constant $C$ is called the doubling constant of $w$ and
 is denoted by $D(w)$. Note that $D(w)\geq 2$, and that in fact the ratio between the length
of a child and the length of its parent is comparable to one, more precisely,
$ D(w)^{-1}\leq {w(I)}/{w(\widehat{I})}\leq 1- D(w)^{-1}$.

\subsection{Dyadic $A^d_p$, reverse H\"older $RH_p^d$ and $C_s^d$ classes}
A  weight $w$ is said to belong to the {\em  dyadic Muckenhoupt $A_p^d$-class} if and only if
\[  [w]_{A_p^d}:= \sup_{I\in \mathcal{D}} (\miw )(m_Iw^{\frac{-1}{p-1}})^{p-1} <\infty,\quad\quad\mbox{for}\quad 1< p<\infty ,\]
where $[w]_{A_p^d}$ is called the $A_p^d$-characteristic of the weight.
If a weight is in $A_p^d$  then it is dyadic doubling. These classes are nested, 
$A_p^d\subset A_q^d$ for all $p\leq q$.
The class $A^d_{\infty}$ is defined by $A^d_{\infty}:= \bigcup_{p>1}A_p^d$. 

A  weight $w$ is said to belong to the {\em dyadic reverse H\"older  $RH_p^d$-class} if and only if
\[ [w]_{RH_p^d}:= \sup_{I\in \mathcal{D}}(m_I w^p)^{\frac{1}{p}}(m_Iw)^{-1}<\infty, \quad \quad\mbox{for}\quad 1<p<\infty,\]
where $[w]_{RH_p^d}$ is called the $RH_p^d$-characteristic of the weight.
If a weight is in $RH_p^d$  then it is not necessarily dyadic doubling
(in the non-dyadic setting reverse H\"older weights are always doubling). Also these classes are nested, 
$RH_p^d\subset RH_q^d$ for all $p\geq q$.
The class $RH^d_1$ is defined by $RH^d_1:= \bigcup_{p>1}RH_p^d$.
 In the non-dyadic setting  $A_{\infty}=RH_1$.
 In the dyadic setting  the collection of dyadic doubling weights in $RH_1^d$ is $A_{\infty}^d$,  hence $A_{\infty}^d$ is
 a proper subset of $RH_1^d$.
See \cite{BeRez} for some recent and very interesting results relating these classes.

\noindent The following are well-known properties of weights (see \cite{JN}) for (ii)):
\begin{lemma}\label{lem:ApRHq}
The following hold
\begin{itemize}
\item If $0\leq s\leq 1$ and $w\in A_{\infty}^d$ then  $w^s\in A_{\infty}$. More precisely, 
if $p>1$ and $w\in A_p^d$ then  $w^s\in A_p$, and $[w^s]_{A_p^d}\leq [w]^s_{A_p^d}$.
\item If $s,q >1$ then $w\in RH_s^d\cap A_q^d$ if and only if $w^s\in A_{s(q-1)+1}$. Moreover
$ [w^s]_{A_{s(q-1)+1}}\leq [w]^s_{RH_s^d}[w]_{A_q^d}^{s}$, $[w]^s_{A_q^d}\leq [w^s]_{A_{s(q-1)+1}}$, and
$[w]_{RH_s^d}^s\leq [w^s]_{A_{s(q-1)+1}}.  $
\item If $p>1$, and $1/p + 1/p'=1$,  then $w\in A_p^d$ if and only if $w^{-1/p-1}\in A_{p'}$.  Moreover
$[w]_{A_p^d}=[w^{-1/p-1}]_{A_{p'}^d}^{p-1}$.
\end{itemize}
\end{lemma}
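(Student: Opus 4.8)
My plan is to derive all three items directly from the definitions of the characteristics, applying Jensen's and Hölder's inequalities on each fixed dyadic interval $I\in\mathcal{D}$ before taking the supremum; no Bellman functions are needed here, since every inequality is pointwise in $I$. I would treat the estimates as the primary content and note that the membership assertions follow at once from finiteness of the resulting characteristics.

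For the first item, fix $I\in\mathcal{D}$ and $p>1$. Because $0\le s\le 1$ the map $t\mapsto t^s$ is concave, so Jensen's inequality gives $m_I w^s\le (m_Iw)^s$, and applied to $w^{-1/(p-1)}$ it gives $m_I w^{-s/(p-1)}=m_I(w^{-1/(p-1)})^s\le (m_Iw^{-1/(p-1)})^s$. Multiplying these and raising the $A_p^d$ factor to the power $s$ yields $(m_Iw^s)(m_Iw^{-s/(p-1)})^{p-1}\le \big[(m_Iw)(m_Iw^{-1/(p-1)})^{p-1}\big]^{s}$, so after taking the supremum $[w^s]_{A_p^d}\le[w]_{A_p^d}^{s}$, and membership in $A_\infty^d$ is immediate.

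The second item is the only one needing care, and the crux is the exponent bookkeeping. I would set $r:=s(q-1)+1$, record the identity $-1/(r-1)=-1/(s(q-1))$ explicitly so that $w^{-s/(r-1)}=w^{-1/(q-1)}$, and thereby write $[w^s]_{A_r}=\sup_I (m_Iw^s)(m_Iw^{-1/(q-1)})^{s(q-1)}$. For the upper bound I would combine the reverse-Hölder estimate $m_Iw^s\le[w]_{RH_s^d}^s(m_Iw)^s$ with the $A_q^d$ estimate $(m_Iw^{-1/(q-1)})^{s(q-1)}\le[w]_{A_q^d}^s(m_Iw)^{-s}$; the factors $(m_Iw)^{\pm s}$ cancel, giving $[w^s]_{A_r}\le[w]_{RH_s^d}^s[w]_{A_q^d}^s$. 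For the two lower bounds I would use, on the one hand, the elementary inequality $(m_Iw^{-1/(q-1)})^{q-1}\ge(m_Iw)^{-1}$ (Hölder with exponents $q,q'$), which bounds $[w^s]_{A_r}$ below by $\sup_I(m_Iw^s)(m_Iw)^{-s}=[w]_{RH_s^d}^s$; and, on the other hand, Jensen's inequality in its convex form $m_Iw^s\ge(m_Iw)^s$, valid since $s>1$, which gives $[w^s]_{A_r}\ge\sup_I\big[(m_Iw)(m_Iw^{-1/(q-1)})^{q-1}\big]^s=[w]_{A_q^d}^s$. The equivalence then reads off: the upper bound is the forward implication, while finiteness of $[w^s]_{A_r}$ forces both $[w]_{RH_s^d}$ and $[w]_{A_q^d}$ finite, which is the converse.

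The third item is pure duality. Setting $\sigma:=w^{-1/(p-1)}$ and using $p'-1=1/(p-1)$, I would compute $\sigma^{-1/(p'-1)}=\sigma^{-(p-1)}=w$, whence $[\sigma]_{A_{p'}^d}=\sup_I(m_I\sigma)(m_Iw)^{1/(p-1)}=\sup_I\big[(m_Iw)(m_Iw^{-1/(p-1)})^{p-1}\big]^{1/(p-1)}=[w]_{A_p^d}^{1/(p-1)}$, which is exactly $[w]_{A_p^d}=[w^{-1/(p-1)}]_{A_{p'}^d}^{p-1}$ and gives the stated equivalence. I expect no genuine obstacle; the one place demanding vigilance is keeping straight which direction of Jensen is in force (concave when $s\le1$, convex when $s>1$) and verifying the matched exponents $r-1=s(q-1)$ and $p'-1=1/(p-1)$, so I would state these algebraic identities before beginning the estimates.
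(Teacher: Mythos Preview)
Your proof is correct: each item follows by the elementary Jensen/H\"older manipulations you describe, and your exponent bookkeeping in item (ii) --- namely $r-1=s(q-1)$ so that $(w^s)^{-1/(r-1)}=w^{-1/(q-1)}$ --- is exactly what is needed. The paper itself does not prove this lemma; it is stated there as a well-known property of weights with a reference to \cite{JN} for item (ii), so there is no argument to compare against.
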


The following property can be found in \cite{GaRu},
\begin{lemma}\label{lem:doublingRHqAp}
If $w\in RH_s^d\cap A_q^d$  then for all $E\subset B$,
$$  \big ({|E|}/{|B|}\big )^q[w]_{A_q^d}^{-1}\leq {w(E)}/{w(B)} \leq  \big ( {|E|}/{|B|}\big )^{1-\frac{1}{s}}[w]_{RH_s^d}.$$
In particular  $ D(w)\leq 2^q[w]_{A_q^d}.$ 
\end{lemma}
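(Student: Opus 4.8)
The plan is to prove the two displayed inequalities independently: the lower bound will use only the $A_q^d$ condition and the upper bound will use only the $RH_s^d$ condition, each via a single application of H\"older's inequality applied on the set $E$ and then enlarging the domain of integration to the (dyadic) interval $B$. Throughout I take $B$ to be a dyadic interval, so that the characteristics $[w]_{A_q^d}$ and $[w]_{RH_s^d}$, being suprema over $\mathcal{D}$, may be applied with $I=B$; the inclusion $E\subset B$ is what lets me pass from integrals over $E$ to integrals over $B$.

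For the lower bound I would start from $|E|=\int_E w^{1/q}w^{-1/q}\,dx$ and apply H\"older with exponents $q$ and $q'=q/(q-1)$, obtaining
\[
|E|^q \leq w(E)\Big(\int_E w^{-1/(q-1)}\,dx\Big)^{q-1}\leq w(E)\Big(\int_B w^{-1/(q-1)}\,dx\Big)^{q-1}.
\]
Writing the last factor as $|B|^{q-1}(m_B w^{-1/(q-1)})^{q-1}$ and invoking the $A_q^d$ condition on $B$ in the form $(m_B w^{-1/(q-1)})^{q-1}\leq [w]_{A_q^d}/(m_B w)$, the right-hand side becomes $w(E)\,|B|^q[w]_{A_q^d}/w(B)$. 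Rearranging gives exactly $(|E|/|B|)^q[w]_{A_q^d}^{-1}\leq w(E)/w(B)$.

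For the upper bound I would instead write $w(E)=\int_E w\cdot 1\,dx$ and apply H\"older with exponents $s$ and $s'=s/(s-1)$, giving
\[
w(E)\leq \Big(\int_E w^s\,dx\Big)^{1/s}|E|^{1-1/s}\leq \Big(\int_B w^s\,dx\Big)^{1/s}|E|^{1-1/s}.
\]
Writing the first factor as $|B|^{1/s}(m_B w^s)^{1/s}$ and applying the $RH_s^d$ condition on $B$ in the form $(m_B w^s)^{1/s}\leq [w]_{RH_s^d}\,m_B w=[w]_{RH_s^d}\,w(B)/|B|$, I obtain $w(E)\leq [w]_{RH_s^d}\,w(B)(|E|/|B|)^{1-1/s}$, which is the claimed upper bound after dividing by $w(B)$. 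Finally, the doubling statement follows by specializing the lower bound: taking $E=I$ and $B=\widehat{I}$ so that $|E|/|B|=1/2$ yields $(1/2)^q[w]_{A_q^d}^{-1}\leq w(I)/w(\widehat{I})$, i.e. $w(\widehat{I})/w(I)\leq 2^q[w]_{A_q^d}$, and taking the supremum over $I\in\mathcal{D}$ gives $D(w)\leq 2^q[w]_{A_q^d}$.

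There is no serious obstacle here; the argument is two H\"older estimates plus the definitions of the characteristics. The only points requiring care are the bookkeeping of the conjugate exponents ($q'$ in the lower bound, $s'$ in the upper bound) and the observation that both estimates genuinely decouple, so that the lower bound needs only membership in $A_q^d$ and the upper bound only membership in $RH_s^d$ (the hypothesis $w\in RH_s^d\cap A_q^d$ is what supplies both simultaneously, and only the $A_q^d$ half is needed for the doubling corollary).
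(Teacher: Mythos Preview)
Your argument is correct and is exactly the classical proof: one H\"older application on $E$ for each inequality, then enlarge the domain to $B$ and invoke the relevant characteristic on the dyadic interval $B$. The paper does not supply its own proof of this lemma; it simply cites \cite{GaRu}, and your proof is precisely the standard argument found there (and in most references on weights).
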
 

A weight $w$ satisfies the {\em $C^d_s$-condition}, for $s \in\mathbb{R}$, if
$$[w]_{C^d_s}:= \sup_{I \in \mathcal{D}} \big (m_I w^s\big )\,\big (\miw\big )^{-s} < \infty.$$
The quantity defined above is called the $C^d_s$-characteristic of
$w$. The class of weights  $C^d_s$ was defined in \cite{KP}. Let us analyze this definition.
For $0 \leq s \leq 1$, we have that any weight satisfies the
condition with $C_s^d$-characteristic $1$, this is just a
consequence of H\"older's Inequality (for $s=0,1$ is trivial).
When $s>1$, the condition is analogous to the dyadic reverse
H\"older condition and
$[w]^{{1}/{s}}_{C^d_s} =[w]_{RH^d_s}.$
For $s<0$, we have that  $ w \in C^d_s$ if and only if $w \in A^d_{1- 1/s},$ moreover
$[w]_{C^d_s} = [w]^{-s}_{A^d_{1-1/s}}$.

\begin{lemma}\label{lem:Csvsws}
If  $w\in C_s^d \cap A_{\infty}^d$ then the following hold
\begin{itemize}
\item  For all $0\leq s\leq 1$, there is a $p>1$ such that  $w^s\in A_p$. 
\item  If $s>1$ then there is $q>1$ such that $w^s\in A_{s(q-1)+1}$.
\item If  $s<0$ then $w^s\in A_{1-s}$.
\end{itemize}
\end{lemma}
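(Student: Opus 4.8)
The plan is to dispose of each of the three cases by first translating the hypothesis $w\in C^d_s$ into membership in an appropriate dyadic Muckenhoupt or reverse H\"older class, and then invoking the corresponding part of Lemma~\ref{lem:ApRHq} to transfer the information to the power $w^s$. The key is the elementary identification of the $C^d_s$ classes recorded just before the statement: $C^d_s$ carries no information when $0\le s\le 1$, it coincides with $RH^d_s$ (with $[w]_{RH^d_s}=[w]_{C^d_s}^{1/s}$) when $s>1$, and it coincides with $A^d_{1-1/s}$ (with $[w]_{C^d_s}=[w]^{-s}_{A^d_{1-1/s}}$) when $s<0$. In each regime this feeds directly into one of the three facts packaged in Lemma~\ref{lem:ApRHq}, so no new analytic input (in particular no Bellman function) is needed.

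For $0\le s\le 1$, I would use only the hypothesis $w\in A^d_{\infty}$. Since $A^d_{\infty}=\bigcup_{p>1}A^d_p$ by definition, there is some $p>1$ with $w\in A^d_p$; then Lemma~\ref{lem:ApRHq}(i) gives $w^s\in A_p$ directly, together with the quantitative bound $[w^s]_{A^d_p}\le[w]^s_{A^d_p}$. For $s>1$, I would first observe that $w\in C^d_s$ is equivalent to $w\in RH^d_s$, and then extract from $w\in A^d_{\infty}$ some $q>1$ with $w\in A^d_q$. Since both $s,q>1$, Lemma~\ref{lem:ApRHq}(ii) applies to $w\in RH^d_s\cap A^d_q$ and yields $w^s\in A_{s(q-1)+1}$, as claimed.

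For $s<0$ the argument is even more direct, and in fact does not use $A^d_{\infty}$ at all (it is automatic, since $C^d_s=A^d_{1-1/s}\subset A^d_{\infty}$). Here $w\in C^d_s$ is equivalent to $w\in A^d_{1-1/s}$, and $1-1/s>1$. Setting $p=1-1/s$, a short computation gives $p-1=-1/s$, so $-1/(p-1)=s$, and the dual exponent is $p'=p/(p-1)=1-s>1$. Lemma~\ref{lem:ApRHq}(iii) then reads precisely as: $w\in A^d_{1-1/s}$ if and only if $w^{s}\in A_{1-s}$, which is exactly the desired conclusion $w^s\in A_{1-s}$.

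The only real obstacle is bookkeeping: one must verify carefully in the $s<0$ case that the exponents align (that $p'=1-s$ and $-1/(p-1)=s$ when $p=1-1/s$), and keep track of where the $A^d_{\infty}$ hypothesis is genuinely invoked—namely in the first two cases to produce a finite exponent $p$ (respectively $q$), and not at all in the third. Beyond this, everything reduces to the three self-improvement and duality facts of Lemma~\ref{lem:ApRHq} combined with the definitions of the $C^d_s$ classes, so I expect the proof to be short.
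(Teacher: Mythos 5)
Your proof is correct and follows essentially the same route as the paper, which simply notes that the lemma ``is a direct application of Lemma~\ref{lem:ApRHq} item by item''; your write-up just makes that application explicit (identifying $C^d_s$ with $RH^d_s$ for $s>1$ and with $A^d_{1-1/s}$ for $s<0$, and extracting a finite exponent from $A^d_\infty$). The exponent bookkeeping in the $s<0$ case ($p=1-1/s$, $-1/(p-1)=s$, $p'=1-s$) checks out.
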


The proof of this lemma is a direct application of Lemma~\ref{lem:ApRHq} item by item.

\subsection{Weighted Haar functions}
For a given weight $v$ and an interval $I$  define the {\em weighted Haar function} as
\begin{equation}\label{def:Haarfunction}
 h^v_I(x)= \frac{1}{v(I)}\left (  \sqrt{\frac{v(I_-)}{v(I_+)}}\,\chi_{I_+}(x)-\sqrt{\frac{v(I_+)}{v(I_-)}} \,\chi_{I_-}(x)\right ),
 \end{equation}
where $\chi_I(x)$  is the characteristic function of the interval $I$.

If $v$ is the Lebesgue measure on $\mathbb{R}$, we will denote the
{\em Haar function} simply by $h_I$. 
It is a simple exercise to verify that the weighted and unweighted Haar functions are related linearly as follows,
\begin{proposition}\label{whaarbasis}
For any weight $v$, there are numbers $\alpha_I^v$,
$\beta^v_I$ such that
$$ h_I(x) = \alpha^v_I \,h^v_I(x) + \beta_I^v \,{\chi_I(x)}/{\sqrt{|I|}}$$
where
{(i)} $|\alpha^v_I | \leq \sqrt{m_Iv},$
{ (ii)}  $|\beta^v_I| \leq {|\Delta_I v|}/{m_Iv},$
 $\Delta_I v:= m_{I_+}v - m_{I_-}v.$
\end{proposition}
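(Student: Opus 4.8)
The plan is to exploit the fact that all three functions in the claimed identity are supported on $I$ and constant on each child $I_+$ and $I_-$. The space of such functions is two-dimensional, and since the weighted Haar function $h^v_I$ has $v$-mean zero while $\chi_I$ does not, the pair $\{h^v_I,\ \chi_I/\sqrt{|I|}\}$ is a basis for it. Hence $h_I$ has a unique representation $h_I=\alpha^v_I h^v_I+\beta^v_I\,\chi_I/\sqrt{|I|}$, and the entire content of the proposition is to identify the two coefficients explicitly and then bound them. The two facts I will use repeatedly are that $h^v_I$ is $L^2(v)$-normalized, $\|h^v_I\|_{L^2(v)}=1$, and orthogonal in $L^2(v)$ to constants, $\int_I h^v_I\,v\,dx=0$.

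First I would extract the coefficients by pairing the identity with each basis element in the $L^2(v)$ inner product. Pairing against $h^v_I$ annihilates the $\chi_I$-term and, using $\|h^v_I\|_{L^2(v)}=1$, yields $\alpha^v_I=\langle h_I,h^v_I\rangle_{L^2(v)}$. Pairing instead against the constant function $1$ annihilates the $h^v_I$-term and gives
\[
\beta^v_I=\frac{\sqrt{|I|}}{v(I)}\int_I h_I\,v\,dx .
\]
Computing the last integral by writing $v(I_\pm)=m_{I_\pm}v\,|I_\pm|$ with $|I_\pm|=|I|/2$ produces $\int_I h_I\,v\,dx=\tfrac12\sqrt{|I|}\,\Delta_I v$, so that $\beta^v_I=\Delta_I v/(2\,m_I v)$.

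The bounds then follow at once. For (i), Cauchy--Schwarz in $L^2(v)$ gives $|\alpha^v_I|\le\|h_I\|_{L^2(v)}\|h^v_I\|_{L^2(v)}=\|h_I\|_{L^2(v)}$, and since $|h_I|^2=1/|I|$ on $I$ we have $\int_I|h_I|^2 v\,dx=v(I)/|I|=m_I v$, whence $|\alpha^v_I|\le\sqrt{m_I v}$. For (ii), the explicit formula gives $|\beta^v_I|=|\Delta_I v|/(2\,m_I v)\le|\Delta_I v|/m_I v$, in fact a factor $2$ stronger than claimed.

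I do not expect a genuine obstacle: the statement is a one-line linear-algebra fact made concrete by a short computation. The only point demanding care is the bookkeeping around the normalization of $h^v_I$, precisely the two properties $\|h^v_I\|_{L^2(v)}=1$ and $\int_I h^v_I\,v\,dx=0$, since it is exactly these that let me read off the coefficients by inner products instead of solving a $2\times2$ system by hand. As a cross-check I would also recover the coefficients by evaluating the identity separately on $I_+$ and $I_-$ and solving the two resulting scalar equations; this gives $\alpha^v_I=2\sqrt{v(I_+)v(I_-)}/\sqrt{|I|\,v(I)}$, and the bound $|\alpha^v_I|\le\sqrt{m_I v}$ is then just the arithmetic--geometric mean inequality $\sqrt{v(I_+)v(I_-)}\le\tfrac12\big(v(I_+)+v(I_-)\big)=\tfrac12 v(I)$, in agreement with the inner-product argument.
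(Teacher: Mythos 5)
Your proof is correct and complete. The paper itself offers no proof of this proposition --- it is dismissed as ``a simple exercise to verify'' --- so there is nothing to compare against; your argument (coefficients extracted by pairing against $h^v_I$ and against constants in the $L^2(v)$ inner product, then Cauchy--Schwarz and the explicit formula for the bounds) is exactly the natural way to carry out that exercise, and your cross-check by solving the $2\times 2$ system on $I_\pm$ confirms the formulas.

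One caveat worth flagging, since it is the only place your proof touches the paper's definitions: you use $\|h^v_I\|_{L^2(v)}=1$, but the paper's displayed formula \eqref{def:Haarfunction} carries the prefactor $1/v(I)$, under which $\|h^v_I\|_{L^2(v)}=1/\sqrt{v(I)}$, not $1$. This is evidently a typo in the paper: with the prefactor $1/\sqrt{v(I)}$ the family is orthonormal in $L^2(v)$, as the paper asserts immediately after the definition, and only with that normalization are the claimed bounds (i), (ii) even dimensionally consistent (your closed form $\alpha^v_I = 2\sqrt{v(I_+)v(I_-)}/\sqrt{|I|\,v(I)}$ makes this visible, since under the printed normalization one would instead get $\alpha^v_I = 2\sqrt{v(I_+)v(I_-)}/\sqrt{|I|}$, which is bounded by $m_I v\,\sqrt{|I|}$ rather than $\sqrt{m_I v}$). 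So your reading is the intended one; you might simply state at the outset that you take $h^v_I$ with the $L^2(v)$-normalizing constant $1/\sqrt{v(I)}$, so that the two properties you rely on, namely $\|h^v_I\|_{L^2(v)}=1$ and $\int_I h^v_I\,v\,dx=0$, both hold.
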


The family $\{h_I^v\}_{I\in\mathcal{D}}$ is an orthonormal  system in $L^2(v)$, with
inner product  $\langle f,g\rangle_v:= \int_{\BBR} f(x)\,\overline{g(x)}\,v(x)dx$.
\subsection{Carleson sequences}

If $v$ is a weight,  a positive sequence $\{\alpha_I\}_{I\in \mathcal{D}}$ is called
a {\em $v$-Carleson sequence with intensity $B$} if for all $J\in \mathcal{D}$,
\begin{equation}\label{def:vCarlesonseq}
\frac{1}{|J|} \sum_{I \in \mathcal{D}(J)}
{\lambda_I}\leq B\; m_Jv.
\end{equation}
When $v=1$ we call a sequence satisfying \eqref{def:vCarlesonseq}  for all ${J \in \mathcal{D}} $ a
{\em Carleson sequence with intensity} $B$. 

\begin{proposition}\label{algcarseq}
Let $v$ be a weight, 
$\{\lambda_I\}_{I\in \mathcal{D}}$ and $\{\gamma_I\}_{I\in \mathcal{D}}$ be two $v$-Carleson sequences with
intensities $A$ and $B$ respectively then for any $c, d >0$ we have
that
\begin{itemize}
\item [(i)]$ \{c \lambda_I + d\gamma_I\}_{I\in \mathcal{D}}$ is a $v$-Carleson sequence with
intensity  $cA + dB$.

\item [ (ii)]$\{ \sqrt{\lambda_I} \sqrt{\gamma_I}\}_{I\in \mathcal{D}}$ is a $v$-Carleson sequence
with intensity  $\sqrt{AB}$.

\item [(iii)]$ \{( c\sqrt{\lambda_I} + d \sqrt{\gamma_I})^{2}\}_{I\in \mathcal{D}}$ is a $v$-Carleson sequence
with intensity  $2c^2A+2d^2B$.

\end{itemize}

\end{proposition}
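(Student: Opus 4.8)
The plan is to verify each item by fixing an arbitrary $J\in\mathcal{D}$ and bounding $\frac{1}{|J|}\sum_{I\in\mathcal{D}(J)}(\cdot)$, using as the only inputs the two defining estimates coming from \eqref{def:vCarlesonseq}, namely $\sum_{I\in\mathcal{D}(J)}\lambda_I\leq A\,|J|\,m_Jv$ and $\sum_{I\in\mathcal{D}(J)}\gamma_I\leq B\,|J|\,m_Jv$. All sequences are nonnegative, so every sum below is a well-defined element of $[0,\infty]$ and the usual manipulations of sums of nonnegative terms apply.

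Part (i) is immediate from additivity of the sum: $\frac{1}{|J|}\sum_{I\in\mathcal{D}(J)}(c\lambda_I+d\gamma_I)$ splits as $c$ times the $\lambda$-sum plus $d$ times the $\gamma$-sum, which is $\leq(cA+dB)\,m_Jv$. For part (ii) the one genuine tool is the Cauchy--Schwarz inequality applied to the summation index $I\in\mathcal{D}(J)$:
\[
\sum_{I\in\mathcal{D}(J)}\sqrt{\lambda_I}\,\sqrt{\gamma_I}\leq\Big(\sum_{I\in\mathcal{D}(J)}\lambda_I\Big)^{\!\frac{1}{2}}\Big(\sum_{I\in\mathcal{D}(J)}\gamma_I\Big)^{\!\frac{1}{2}}\leq\big(A|J|m_Jv\big)^{\frac{1}{2}}\big(B|J|m_Jv\big)^{\frac{1}{2}}=\sqrt{AB}\,|J|\,m_Jv,
\]
and dividing by $|J|$ gives intensity $\sqrt{AB}$.

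For part (iii) I would reduce to (i) through the elementary inequality $(a+b)^2\leq 2a^2+2b^2$ applied termwise with $a=c\sqrt{\lambda_I}$ and $b=d\sqrt{\gamma_I}$, which yields $(c\sqrt{\lambda_I}+d\sqrt{\gamma_I})^2\leq 2c^2\lambda_I+2d^2\gamma_I$ for every $I$. Since a nonnegative sequence dominated termwise by a $v$-Carleson sequence is itself $v$-Carleson of the same intensity, part (i) applied to $\{2c^2\lambda_I+2d^2\gamma_I\}$ delivers the stated intensity $2c^2A+2d^2B$. Expanding the square instead and invoking (i) and (ii) on the three resulting terms would give the sharper value $(c\sqrt{A}+d\sqrt{B})^2$, but the cleaner bound recorded in the proposition is all that is needed later. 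I expect no real obstacle here: the Carleson condition is tested separately on each $J$, so it is stable under exactly the linear and Cauchy--Schwarz manipulations one would perform on ordinary sums of nonnegative numbers, and the only inequality of substance is Cauchy--Schwarz in (ii).
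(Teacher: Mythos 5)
Your proof is correct and is exactly the elementary argument the paper has in mind: the paper itself omits the proof, calling it ``quite simple'' and citing \cite{MoP}, and your three steps (linearity for (i), Cauchy--Schwarz over $I\in\mathcal{D}(J)$ for (ii), and the termwise bound $(a+b)^2\leq 2a^2+2b^2$ reducing (iii) to (i)) are the standard route. Your side remark that expanding the square would give the sharper intensity $(c\sqrt{A}+d\sqrt{B})^2$ is also accurate.
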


The proof of these statements is quite simple, see \cite{MoP}.

\section{Main tools}

In this section, we state the lemmas and theorems necessary
to get the  estimate for  the $t$-Haar multipliers
of complexity $(m,n)$.

\subsection{Carleson Lemmas}

The Weighted Carleson Lemma we present here is a variation in the spirit of other weighted Carleson embedding 
theorems that appeared before in the literature \cite{NV, NTV1}. You can find a proof in \cite{MoP}.
%


\begin{lemma}[Weighted Carleson Lemma]\label{weightedCarlesonLem}
Let $v$ be a 
 weight, then $\{\alpha_{L}\}_{L \in \mathcal{D}}$ is a $v$-Carleson sequence
with intensity $B$ if and only if for all   non-negative $v$-measurable functions $F$ on the
line,
\begin{equation}\label{eqn:WCL}
\sum_{L \in \mathcal{D}} \alpha_{L} \inf_{x \in L} F(x)  \leq B
\int_{\mathbb{R}}F(x) \,v(x)\,dx.
\end{equation}
\end{lemma}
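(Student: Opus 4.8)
The plan is to prove the two implications separately, the forward direction being the substantial one.

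For the easy direction ($\Leftarrow$), assume \eqref{eqn:WCL} holds for every non-negative $v$-measurable $F$ and simply test it against $F=\chi_J$ for a fixed $J\in\mathcal{D}$. Since $\inf_{x\in L}\chi_J(x)$ equals $1$ when $L\subseteq J$ and $0$ otherwise, the left-hand side collapses to $\sum_{L\in\mathcal{D}(J)}\alpha_L$, while the right-hand side is $B\int_J v(x)\,dx=B\,v(J)=B|J|\,m_Jv$. Dividing by $|J|$ recovers \eqref{def:vCarlesonseq}, so $\{\alpha_L\}$ is a $v$-Carleson sequence with intensity $B$.

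For the forward direction ($\Rightarrow$), assume the Carleson condition and run a layer-cake argument in the variable $\lambda$. Writing $\Omega_\lambda:=\{x:F(x)>\lambda\}$, one has for each $L$ the identity $\inf_{x\in L}F(x)=\int_0^\infty \chi\{L\subseteq\Omega_\lambda\}\,d\lambda$, the discrepancy between the events $L\subseteq\Omega_\lambda$ and $\inf_L F>\lambda$ occurring only on a $\lambda$-null set. Multiplying by $\alpha_L$, summing, and interchanging the sum and the integral by Tonelli gives
\[
\sum_{L\in\mathcal{D}}\alpha_L\,\inf_{x\in L}F(x)=\int_0^\infty\Big(\sum_{L\subseteq\Omega_\lambda}\alpha_L\Big)\,d\lambda.
\]
The crux is the inner estimate $\sum_{L\subseteq\Omega_\lambda}\alpha_L\leq B\,v(\Omega_\lambda)$. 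To obtain it, I would decompose the dyadic intervals contained in $\Omega_\lambda$ into their maximal representatives $\{L_j\}$, which are pairwise disjoint; every dyadic $L\subseteq\Omega_\lambda$ then sits inside exactly one $L_j$, so applying the Carleson condition on each $L_j$ and using disjointness together with $\bigcup_j L_j\subseteq\Omega_\lambda$ yields $\sum_{L\subseteq\Omega_\lambda}\alpha_L=\sum_j\sum_{L\in\mathcal{D}(L_j)}\alpha_L\leq\sum_j B\,v(L_j)\leq B\,v(\Omega_\lambda)$. Finally the weighted layer-cake identity $\int_0^\infty v(\Omega_\lambda)\,d\lambda=\int_{\BBR}F(x)v(x)\,dx$ (Tonelli again) completes the bound.

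The main obstacle is the existence and use of the maximal dyadic decomposition of $\Omega_\lambda$: when $\Omega_\lambda$ is unbounded there need not be any maximal dyadic interval contained in it. I would dispose of this by a routine reduction, first establishing \eqref{eqn:WCL} for $F$ of compact support (so that each $\Omega_\lambda$ is bounded and the maximal intervals exist), and then recovering the general case by monotone convergence applied to the truncations $\min(F,N)\,\chi_{[-N,N]}$ as $N\to\infty$. Everything else amounts to bookkeeping interchanges of non-negative sums and integrals.
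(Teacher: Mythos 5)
Your proof is correct. The paper itself does not prove this lemma---it defers to \cite{MoP}---and your argument is essentially the one given there: the easy direction by testing \eqref{eqn:WCL} against $F=\chi_J$, and the forward direction via the layer-cake identity, Tonelli, the decomposition of $\{L\in\mathcal{D} : L\subseteq\Omega_\lambda\}$ into maximal pairwise disjoint dyadic intervals, and the Carleson condition applied on each maximal interval. Your preliminary truncation to bounded, compactly supported $F$ is a clean and legitimately needed device to guarantee that maximal dyadic intervals exist (the issue is real: for $F\equiv 1$ and $v$ integrable, $\Omega_\lambda=\mathbb{R}$ contains no maximal dyadic interval, and one must instead exhaust by the towers $[0,2^N)$ and $[-2^N,0)$ or truncate as you do), a point usually left implicit.
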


The following lemma we view as a finer replacement for H\"older's inequality:
 $ 1\leq (m_I w ) (m_I w^{-1/(p-1)})^{p-1}$.

\begin{lemma}[$A_p$-Little Lemma]\label{litlem}
Let $v$ be a weight, such that $v^{-1/(p-1)}$ is a a weight as well, and
let $\{ \lambda_I \}_{I \in \mathcal{D}}$ be a Carleson sequence with
intensity $Q$ then  $\{{ \lambda_I}/{(m_Iv^{-1/(p-1)})^{p-1}} \}_{I \in \mathcal{D}}$ is  a $v$-Carleson sequence with intensity $4Q$, that is
for all $J\in \mathcal{D}$,
\[ \frac{1}{|J|} \sum_{I \in \mathcal{D}(J)} \frac{\lambda_I}{(m_Iv^{-1/(p-1)})^{p-1}}\leq 4Q \; m_Jv.\]
\end{lemma}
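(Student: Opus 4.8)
The plan is to prove the (rescaled) statement by a Bellman function induction on the dyadic tree. First I would normalize: since both sides are homogeneous of degree one in $\{\lambda_I\}$ and in $Q$, I may assume $Q=1$, so that the Carleson hypothesis reads $\frac1{|J|}\sum_{I\in\mathcal D(J)}\lambda_I\le 1$ for all $J$. Writing $\sigma:=v^{-1/(p-1)}$ (a weight by hypothesis), so that $v=\sigma^{1-p}$, I set $x_I:=m_I\sigma$, $y_I:=m_I v$, and the running intensity $\ell_I:=\frac1{|I|}\sum_{I'\in\mathcal D(I)}\lambda_{I'}\in[0,1]$. Three elementary facts drive the argument: averages split as midpoints, $x_I=\tfrac12(x_{I_+}+x_{I_-})$ and $y_I=\tfrac12(y_{I_+}+y_{I_-})$; the running intensities obey the telescoping identity $\lambda_I/|I|=\ell_I-\tfrac12(\ell_{I_+}+\ell_{I_-})$; and Jensen's inequality (convexity of $t\mapsto t^{1-p}$ for $p>1$) gives the constraint $x_I^{p-1}y_I\ge 1$, i.e. $x_I^{1-p}\le y_I$. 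The target quantity is $\lambda_I/(m_I\sigma)^{p-1}=\lambda_I\,x_I^{1-p}$.

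I would then introduce the Bellman function
\[ B(x,y,\ell):=4y-e^{\,1-\ell}\,x^{1-p},\qquad (x,y,\ell)\in\Omega:=\{x,y>0,\ x^{p-1}y\ge1,\ 0\le\ell\le1\},\]
and check three properties. (Size) $0\le B\le 4y$, where the lower bound uses the constraint $x^{1-p}\le y$ together with $e^{1-\ell}\le e<4$ on $[0,1]$. (Monotonicity) $\partial_\ell B=e^{1-\ell}x^{1-p}\ge x^{1-p}$, since $\ell\le1$. (Concavity) $B$ is concave on the convex set $\Omega$: it is linear in $y$ with no mixed $y$-derivatives, so the $y$-row and column of the Hessian vanish, while the $(x,\ell)$-block is negative definite — a short computation gives $B_{xx}=-e^{1-\ell}(p-1)p\,x^{-p-1}<0$ and $\det\!\begin{pmatrix}B_{xx}&B_{x\ell}\\B_{x\ell}&B_{\ell\ell}\end{pmatrix}=e^{2(1-\ell)}(p-1)x^{-2p}>0$, so the full Hessian is negative semidefinite.

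With $B$ at hand the estimate is mechanical. For fixed $I$, setting $\bar\ell:=\tfrac12(\ell_{I_+}+\ell_{I_-})$, I would split
\[ B(x_I,y_I,\ell_I)-\tfrac12\big(B(x_{I_+},y_{I_+},\ell_{I_+})+B(x_{I_-},y_{I_-},\ell_{I_-})\big)=(\mathrm A)+(\mathrm B),\]
with $(\mathrm A)=B(x_I,y_I,\ell_I)-B(x_I,y_I,\bar\ell)$ and $(\mathrm B)=B(x_I,y_I,\bar\ell)-\tfrac12\sum_\pm B(x_{I_\pm},y_{I_\pm},\ell_{I_\pm})$. Since $(x_I,y_I,\bar\ell)$ is the midpoint of the two children's data, concavity gives $(\mathrm B)\ge0$; and since $\ell_I-\bar\ell=\lambda_I/|I|\ge0$ with $\partial_\ell B\ge x_I^{1-p}$ throughout $[\bar\ell,\ell_I]$, the mean value theorem gives $(\mathrm A)\ge x_I^{1-p}(\ell_I-\bar\ell)=\lambda_I/(|I|\,x_I^{p-1})$. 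Multiplying by $|I|$ and summing over $I\in\mathcal D(J)$ of length $>2^{-N}|J|$ telescopes to $|J|B(x_J,y_J,\ell_J)$ minus the nonnegative bottom-generation terms; discarding the latter (the only place the bound $B\ge0$ is used) and using $B\le 4y_J=4\,m_Jv$ yields, after $N\to\infty$,
\[ \frac1{|J|}\sum_{I\in\mathcal D(J)}\frac{\lambda_I}{(m_I\sigma)^{p-1}}\le 4\,m_Jv,\]
and undoing the normalization replaces $4$ by $4Q$, which is the claim.

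The main obstacle will be isolating the right Bellman function, above all securing its concavity. The naive guess $\ell\,x^{1-p}$ has exactly the desired $\ell$-derivative but is convex in $x$ (a negative power), so it fails; the remedy is to carry the needed $x^{1-p}$ inside the manifestly concave term $-e^{1-\ell}x^{1-p}$ and to absorb it against the linear reserve $4y$. Correspondingly, the Jensen constraint $x^{p-1}y\ge1$ enters the proof in exactly one spot, namely to guarantee $B\ge0$ so that the tail of the telescoping sum can be dropped. Verifying that the $(x,\ell)$-Hessian is negative definite is then the only genuine computation; the rest is bookkeeping on the tree, and the constant $4$ in the intensity is precisely the linear coefficient needed to dominate $e^{1-\ell}$.
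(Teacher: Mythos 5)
Your proof is correct, and it is essentially the same argument as the paper's: a Bellman function proof on the convex domain cut out by the H\"older constraint $x^{p-1}y\ge 1$, verifying the same three properties (size, lower bound on $\partial_\ell B$, concavity of the Hessian), followed by the identical midpoint-splitting, mean-value, and induction-on-scales telescoping. The only difference is the explicit choice of Bellman function --- the paper uses $B(u,v,l)=u-\frac{1}{v^{p-1}(1+l)}$, so its factor $4$ comes from $\partial_l B\ge \frac{1}{4v^{p-1}}$, whereas your $B(x,y,\ell)=4y-e^{1-\ell}x^{1-p}$ extracts the $4$ from the size bound via $e<4$ --- an immaterial variation within the same method.
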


For $p=2$ this was
proved  in \cite[Proposition 3.4]{Be}, or \cite[Proposition 2.1]{Be1},  using the same Bellman function as in the proof
we present in  the Appendix.

\begin{lemma}[\cite{NV}]\label{folklem}
Let $v$ be a weight such that $v^{-1/(p-1)}$ is also a weight.
Let $\{\lambda_{J}\}_{J \in \mathcal{D}}$ be a Carleson sequence
with intensity $B$. Let $F$ be a non-negative measurable function on the
line. Then
\begin{equation*}
\sum_{J \in \mathcal{D}} \frac{ \lambda_{J} }{(m_J v^{-1/(p-1)})^{p-1}} \inf_{x \in J} F(x)
\leq C \;B \int_{\mathbb{R}}F(x)\,v(x)\,dx.
\end{equation*}
\end{lemma}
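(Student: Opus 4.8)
The plan is to combine the Weighted Carleson Lemma (Lemma~\ref{weightedCarlesonLem}) with the $A_p$-Little Lemma (Lemma~\ref{litlem}), since the two together manifestly yield exactly the claimed inequality. First I would observe that the hypotheses match perfectly: we are given a weight $v$ for which $v^{-1/(p-1)}$ is also a weight, and a \emph{Carleson} sequence $\{\lambda_J\}$ (unweighted, intensity $B$). The $A_p$-Little Lemma transforms the unweighted Carleson sequence $\{\lambda_J\}$ into the \emph{$v$-Carleson} sequence $\{\lambda_J/(m_J v^{-1/(p-1)})^{p-1}\}$, with intensity $4B$. So the new sequence lives in precisely the setting where the Weighted Carleson Lemma applies.

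The key steps, in order, are as follows. Set $\mu_J := \lambda_J/(m_J v^{-1/(p-1)})^{p-1}$. By Lemma~\ref{litlem}, $\{\mu_J\}_{J\in\mathcal{D}}$ is a $v$-Carleson sequence with intensity $4B$, i.e.\ for every $J\in\mathcal{D}$,
\[
\frac{1}{|J|}\sum_{I\in\mathcal{D}(J)}\mu_I \leq 4B\, m_J v.
\]
Now apply the forward direction of the Weighted Carleson Lemma (Lemma~\ref{weightedCarlesonLem}) to the $v$-Carleson sequence $\{\mu_J\}$ with intensity $4B$ and to the non-negative function $F$: inequality~\eqref{eqn:WCL} gives
\[
\sum_{J\in\mathcal{D}} \mu_J \inf_{x\in J} F(x) \leq 4B \int_{\mathbb{R}} F(x)\, v(x)\, dx.
\]
Substituting back the definition of $\mu_J$ yields exactly the asserted estimate with $C=4$.

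I do not expect any genuine obstacle here, because the statement is essentially a two-line corollary obtained by chaining the two preceding lemmas; the attribution to \cite{NV} suggests it is folklore stated for later convenience. The only point requiring the slightest care is bookkeeping the intensity constant: one must be sure that the factor $4$ coming from the $A_p$-Little Lemma is carried faithfully through the Weighted Carleson Lemma, so that the final absolute constant $C$ is independent of $v$, $p$, and the sequence. Since both intermediate lemmas are quantitative with explicit constants, tracking $C=4$ (or any universal constant) is immediate, and the non-negativity and measurability hypotheses on $F$ are inherited verbatim from Lemma~\ref{weightedCarlesonLem}.
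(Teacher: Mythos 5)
Your proposal is correct and matches the paper's own argument exactly: the paper states that Lemma~\ref{folklem} is an immediate consequence of the $A_p$-Little Lemma~\ref{litlem} (which converts $\{\lambda_J\}$ into the $v$-Carleson sequence $\{\lambda_J/(m_J v^{-1/(p-1)})^{p-1}\}$ with intensity $4B$) combined with the Weighted Carleson Lemma~\ref{weightedCarlesonLem}. Your bookkeeping of the constant $C=4$ is also consistent with the quantitative form of Lemma~\ref{litlem}.
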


Lemma~\ref{folklem}  is an immediate consequence of  Lemma~\ref{litlem}, and the
Weighted Carleson Lemma~\ref{weightedCarlesonLem}.
Note that Lemma~\ref{litlem}  can be deduced from Lemma~\ref{folklem}  with $F(x)=\chi_J(x)$.

The following lemma , for $v=w^{-1}$, and 
for $\alpha={1}/{4}$ appeared in \cite{Be}, and for $0<\alpha <1/2$,  in \cite{NV}.
 With small modification in their  proof, using the Bellman function
 $B(x,y)=x^{\alpha}y^{\beta}$ with domain of definition the first quadrant $x,y >0$, we can accomplish the result below, for a complete proof
 see \cite{Mo}.

 \begin{lemma} \label{alphalemma} \emph{($\alpha\beta$-Lemma)}
  Let $u, v$ be  weights. 
 Then for any $J \in \mathcal{D}$ and any $\alpha, \beta \in (0, {1}/{2})$
 \begin{equation}\label{alphameasure}
   \frac{1}{|J|} \sum_{I \in \mathcal{D}(J)}  \dius  |I| (m_Iu)^{\alpha} (m_Iv)^{\beta} \leq C_{\alpha,\beta}(m_Ju)^{\alpha} (m_Jv)^{\beta}.
 \end{equation}
   The constant $C_{\alpha,\beta}={36}/{\min\{\alpha -2\alpha^2,\beta-2\beta^2\}}$.
 \end{lemma}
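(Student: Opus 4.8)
The plan is to prove the estimate by a Bellman function induction with $B(x,y)=x^{\alpha}y^{\beta}$ on the first quadrant $\{x,y>0\}$, in the spirit of \cite{Be, NV} but now with this two-parameter weight. The entire argument reduces to a single-step concavity inequality for $B$, which is then propagated down the dyadic tree by telescoping.

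First I would record the pointwise (one-step) estimate: there is $c=c(\alpha,\beta)>0$ so that whenever $(x,y)$ is the midpoint of two points $(x_+,y_+),(x_-,y_-)$ in the first quadrant,
\[
B(x,y)-\tfrac12\big(B(x_+,y_+)+B(x_-,y_-)\big)\ \geq\ c\,\frac{(x_+-x_-)^2}{x^2}\,B(x,y).
\]
To get this I would compute the Hessian of $B$ and check that it is negative definite for $0<\alpha,\beta<1/2$: the diagonal entries $B_{xx},B_{yy}$ are negative, and $\det\mathrm{Hess}\,B=\alpha\beta(1-\alpha-\beta)\,x^{2\alpha-2}y^{2\beta-2}>0$ precisely because $\alpha+\beta<1$. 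Writing the drop via Taylor's formula with integral remainder along the segment joining $(x_-,y_-)$ to $(x_+,y_+)$, I would discard the $(y_+-y_-)^2$ and cross contributions by completing the square (legitimate since the discriminant condition $B_{xy}^2<B_{xx}B_{yy}$ is exactly negative definiteness), keeping only a multiple of $-B_{xx}(x_+-x_-)^2=\alpha(1-\alpha)\tfrac{(x_+-x_-)^2}{x^2}B(x,y)$. Bounding the integrand from below on the central part of the segment, using $|x_+-x_-|<2x$ and $|y_+-y_-|<2y$ so that the intermediate points stay comparable to $(x,y)$, yields the displayed inequality; tracking the resulting constants gives the stated value $C_{\alpha,\beta}=36/\min\{\alpha-2\alpha^2,\beta-2\beta^2\}$.

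Then I would run the induction. Fixing $J$ and applying the one-step estimate at each $I\in\mathcal{D}(J)$ with $x=m_Iu$, $y=m_Iv$ (the midpoint relation holds because $m_Iu=\tfrac12(m_{I_+}u+m_{I_-}u)$, and likewise for $v$), multiplying by $|I|$ and using $|I_\pm|=|I|/2$ gives
\[
|I|B(m_Iu,m_Iv)-|I_+|B(m_{I_+}u,m_{I_+}v)-|I_-|B(m_{I_-}u,m_{I_-}v)\ \geq\ c\,|I|\,\dius (m_Iu)^{\alpha}(m_Iv)^{\beta}.
\]
Summing over all $I\in\mathcal{D}(J)$ with $|I|\geq 2^{-N}|J|$, the left side telescopes to $|J|B(m_Ju,m_Jv)$ minus a sum of bottom-scale terms, which are nonnegative because $B\geq 0$; hence it is bounded by $|J|(m_Ju)^{\alpha}(m_Jv)^{\beta}$. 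Letting $N\to\infty$ by monotone convergence and dividing by $|J|$ yields the lemma with $C_{\alpha,\beta}=1/c$. The main obstacle is the one-step inequality, namely extracting a clean lower bound that only sees the $u$-difference while controlling the cross term; this is where the hypothesis $\alpha,\beta<1/2$ (equivalently $\alpha+\beta<1$) is indispensable, as it is exactly what makes $\mathrm{Hess}\,B$ negative definite.
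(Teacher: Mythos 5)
Your proposal is correct and follows essentially the same route as the paper's: the paper's proof (deferred to \cite{Mo}, and modeled on the template shown in the Appendix for the $A_p$-Little Lemma) is precisely a Bellman-function induction on scales with $B(x,y)=x^{\alpha}y^{\beta}$ on the first quadrant, with the one-step concavity estimate extracted from the negative-definite Hessian and Taylor's formula with integral remainder, exactly as you do. Two minor caveats: $\alpha,\beta<1/2$ is not \emph{equivalent} to $\alpha+\beta<1$ (it only implies it; the stronger hypothesis is what makes $\min\{\alpha-2\alpha^2,\beta-2\beta^2\}$ positive, while negative definiteness needs only $\alpha+\beta<1$), and your assertion that the bookkeeping yields exactly the constant $36/\min\{\alpha-2\alpha^2,\beta-2\beta^2\}$ is stated rather than verified, although your argument visibly produces a constant of precisely this form.
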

 From this lemma we   immediately deduce   the following,

\begin{lemma} \label{Apalphalemma}
 Let $ 1<q < \infty$, $w\in A_q^d$, then  $\{ \mu^{q,\alpha}_I \}_{I \in \mathcal{D}}$, where 
 \begin{equation*}
  \mu^{q,\alpha }_I:= (m_Iw)^{\alpha}(m_I w^{\frac{-1}{q-1}})^{\alpha(q-1)}|I|
 \bigg( \frac{|\Delta_I w|^2}{(m_I w)^2} + \frac{|\Delta_I w^{\frac{-1}{q-1}}|^2}{(m_I w^{\frac{-1}{q-1}})^2} \bigg), 
  \end{equation*}
is a Carleson sequence with Carleson intensity at most $C_{\alpha}
[w]_{A_q}^{\alpha} $ for any $\alpha \in \big(0, \max\{1/2,
{1}/{2(q-1)}\}\big)$. Moreover, $\{ \nu^q_I \}_{I \in
\mathcal{D}}$, where
\begin{equation*}
  \nu^q_I:= (m_Iw)(m_I w^{\frac{-1}{q-1}})^{(q-1)}|I| 
\bigg( \frac{|\Delta_I w|^2}{(m_I w)^2} + \frac{|\Delta_I w^{\frac{-1}{q-1}}|^2}{(m_I w^{\frac{-1}{q-1}})^2} \bigg) 
    \end{equation*}
is a Carleson sequence with Carleson intensity at most $C[w]_{A_q}$.
\end{lemma}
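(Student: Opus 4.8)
The plan is to apply the $\alpha\beta$-Lemma (Lemma~\ref{alphalemma}) termwise to the sum defining $\mu^{q,\alpha}_I$, and then to read off the statement for $\nu^q_I$ by a pointwise comparison with $\mu^{q,\alpha}_I$. I split $\mu^{q,\alpha}_I$ into the two pieces coming from the two terms inside the parenthesis and handle each by a single application of Lemma~\ref{alphalemma}. For the piece carrying $|\Delta_I w|^2/(m_Iw)^2$ I take $u=w$, $v=w^{-1/(q-1)}$ and exponents $\alpha$ and $\beta=\alpha(q-1)$, so that the leftover factor $(m_Iw)^\alpha(m_Iw^{-1/(q-1)})^{\alpha(q-1)}$ is exactly the weight factor $(m_Iu)^\alpha(m_Iv)^\beta$ in \eqref{alphameasure}. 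For the piece carrying $|\Delta_I w^{-1/(q-1)}|^2/(m_Iw^{-1/(q-1)})^2$ I interchange the roles, taking $u=w^{-1/(q-1)}$, $v=w$ with exponents $\alpha(q-1)$ and $\alpha$; again the hypothesis of Lemma~\ref{alphalemma} matches termwise.

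Each application bounds the corresponding averaged sum over $\mathcal{D}(J)$ by a constant times $(m_Jw)^\alpha(m_Jw^{-1/(q-1)})^{\alpha(q-1)}=\big[(m_Jw)(m_Jw^{-1/(q-1)})^{q-1}\big]^\alpha$, which by the definition of the $A_q^d$-characteristic is at most $[w]_{A_q^d}^\alpha$, uniformly in $J$. Thus each piece is a Carleson sequence of intensity at most $C_{\alpha,\alpha(q-1)}[w]_{A_q^d}^\alpha$, and adding the two via Proposition~\ref{algcarseq}(i) shows $\{\mu^{q,\alpha}_I\}$ is Carleson with intensity at most $2C_{\alpha,\alpha(q-1)}[w]_{A_q^d}^\alpha=:C_\alpha[w]_{A_q^d}^\alpha$, as asserted.

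For $\{\nu^q_I\}$ I avoid a second appeal to the $\alpha\beta$-Lemma and instead use the algebraic identity
\[ \nu^q_I=\big[(m_Iw)(m_Iw^{-1/(q-1)})^{q-1}\big]^{1-\alpha}\,\mu^{q,\alpha}_I, \]
valid for every $\alpha$. Fixing one admissible $\alpha$ and bounding the bracket by $[w]_{A_q^d}^{1-\alpha}$ (again the $A_q^d$-condition, using $1-\alpha\ge0$) gives the pointwise domination $\nu^q_I\le [w]_{A_q^d}^{1-\alpha}\mu^{q,\alpha}_I$. Summing over $I\in\mathcal{D}(J)$, dividing by $|J|$, and invoking the bound just obtained for $\mu^{q,\alpha}$ then shows that $\{\nu^q_I\}$ is Carleson with intensity at most $[w]_{A_q^d}^{1-\alpha}\cdot C_\alpha[w]_{A_q^d}^\alpha=C_\alpha[w]_{A_q^d}$.

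The only step needing genuine care is the admissible range of $\alpha$: Lemma~\ref{alphalemma} requires both of its exponents to lie in $(0,1/2)$, so I must keep $\alpha<1/2$ and $\alpha(q-1)<1/2$ simultaneously, i.e. $\alpha<\min\{1/2,1/(2(q-1))\}$. Such $\alpha$ always exist, and for the $\nu^q_I$-statement a single such choice suffices, the constant then depending on $q$ through $\beta=\alpha(q-1)$. Everything else is routine bookkeeping — matching the prefactors to the pair $(u,v)$ demanded by \eqref{alphameasure}, reading off the right-hand sides, and using the additivity and scaling of Carleson intensities.
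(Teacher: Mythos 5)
Your proof is correct and follows essentially the same route as the paper: two symmetric applications of the $\alpha\beta$-Lemma with $(u,v)=(w,w^{-1/(q-1)})$, exponents $(\alpha,\alpha(q-1))$, and then with the roles interchanged, followed by the $A_q^d$-condition to bound $(m_Jw)^{\alpha}(m_Jw^{-1/(q-1)})^{\alpha(q-1)}\le [w]_{A_q^d}^{\alpha}$, and the pointwise domination $\nu^q_I\le [w]_{A_q^d}^{1-\alpha}\mu^{q,\alpha}_I$ for the second sequence. You are also right that the admissible range must be $\alpha<\min\{1/2,1/(2(q-1))\}$ rather than the stated $\max$: the paper's own proof likewise assumes both $\alpha<1/2$ and $\alpha(q-1)<1/2$, so the $\max$ in the statement is a typo that your bookkeeping correctly repairs.
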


\begin{proof}
Set $u=w$, $v=w^{-\frac{1}{q-1}}$, $\beta= \alpha(q-1)$. By
hypothesis $0<\alpha < 1/2$ and also $0< \alpha <
{1}/{2(q-1)}$ which implies that $0<\beta < 1/2$, we can now
use Lemma \ref{alphalemma} to show that $\mu^{q,\alpha}_I$ is a Carleson
sequence with intensity at most $c_{\alpha}[w]_{A^d_q}^{\alpha}$.
For the second statement suffices to notice that $\nu^q_I \leq \mu^{q,\alpha}_I
[w]_{A^d_q}^{1-\alpha}$ for all $I \in \mathcal{D}$, 
for some $\alpha\in\big(0, \max\{1/2,{1}/{2(q-1)}\}\big)$ 
\end{proof}

A proof of this lemma for $q=2$ that works on geometric doubling metric spaces can be found in \cite{NV1, V}.
In those papers $\alpha=1/4$ can be used, and in that case the constant $C_{\alpha}$ can be replaced by $288$.

   


\subsection{Lift Lemma}

Given a dyadic interval $L$, and weights $u,v$, we introduce a family of stopping
time intervals $\mathcal {ST}^m_L$ such that the averages  of the weights over any
stopping time interval $K \in \mathcal{ST}^m_L$ are comparable to the
averages on $L$, and $|K|\geq 2^m|L|$. This construction appeared in \cite{NV} for the case $u=w$, $v=w^{-1}$.
We also present a lemma that lifts $w$-Carleson sequences on intervals to
$w$-Carleson sequences on ``$m$-stopping intervals". This was used in \cite{NV}
for a very specific choice of $m$-stopping time intervals $\mathcal {ST}^m_L$.

\begin{lemma}[Lift Lemma \cite{NV}] \label{liftlem}
Let $u$ and $v$  be  weights, 
$L$ be a dyadic interval and $m,n $ be  fixed positive integers.  Let
$\mathcal{ST}^m_L$ be the
collection of maximal stopping time intervals $K \in
\mathcal{D}(L)$, where the stopping criteria are
 either {\em (i)} $\; |\Delta_Ku|/m_Ku + |\Delta_Kv|/m_Kv \geq {1}/{m+n+2}$,
  or  {\em (ii)} $\,|K| = 2^{-m}|L|$. Then for any stopping interval $K\in \mathcal{ST}^m_L$,
$\, e^{-1}m_Lu  \leq m_Ku \leq e\,m_Lu $, and hence also $\, e^{-1}m_Lv \leq m_Kv \leq e\,m_Lv $.
\end{lemma}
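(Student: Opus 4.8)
The plan is to exploit the \emph{maximality} of the stopping intervals: every dyadic interval $I$ with $K\subsetneq I\subseteq L$ fails both stopping criteria, and the failure of (i) gives multiplicative control on how the averages change as one passes from a parent to a child. First I would set up the chain of ancestors. Fix $K\in\mathcal{ST}^m_L$ and write
\[
K=K_0\subsetneq K_1\subsetneq\cdots\subsetneq K_r=L,\qquad K_{j+1}=\widehat{K_j}.
\]
Since stopping criterion (ii) forces $|K|\geq 2^{-m}|L|$, the depth satisfies $r\leq m$. By maximality none of $K_1,\dots,K_r$ is a stopping interval, so criterion (i) fails on each of them:
\begin{equation*}
\frac{|\Delta_{K_j}u|}{m_{K_j}u}+\frac{|\Delta_{K_j}v|}{m_{K_j}v}<\frac{1}{m+n+2},\qquad j=1,\dots,r.
\end{equation*}

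Next I would convert this into control on consecutive ratios. Because the two children of a dyadic interval have equal length, $m_{K_{j+1}}u=\tfrac12\big(m_{(K_{j+1})_+}u+m_{(K_{j+1})_-}u\big)$ while $\Delta_{K_{j+1}}u=m_{(K_{j+1})_+}u-m_{(K_{j+1})_-}u$; hence $m_{K_j}u=m_{K_{j+1}}u\pm\tfrac12\Delta_{K_{j+1}}u$ according to whether $K_j$ is the right or left child of $K_{j+1}$, so that
\begin{equation*}
\left|\frac{m_{K_j}u}{m_{K_{j+1}}u}-1\right|=\frac12\,\frac{|\Delta_{K_{j+1}}u|}{m_{K_{j+1}}u}\leq\frac{1}{2(m+n+2)}.
\end{equation*}

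Then I would telescope. Writing $m_Ku/m_Lu=\prod_{j=0}^{r-1}m_{K_j}u/m_{K_{j+1}}u$ as a product of at most $r\leq m< m+n+2$ factors, each within $\tfrac{1}{2(m+n+2)}$ of $1$, I would bound it above by $\big(1+\tfrac{1}{2(m+n+2)}\big)^{m+n+2}\leq e^{1/2}\leq e$ and below by $\big(1-\tfrac{1}{2(m+n+2)}\big)^{m+n+2}\geq e^{-1}$, using the elementary estimates $(1+a/N)^N\leq e^a$ and $\log(1-x)\geq -x/(1-x)$ (the latter giving $N\log(1-\tfrac1{2N})\geq -\tfrac{N}{2N-1}\geq -1$). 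This yields $e^{-1}m_Lu\leq m_Ku\leq e\,m_Lu$. Since the failure of (i) bounds the $v$-term by the same quantity $\tfrac{1}{m+n+2}$, the identical telescoping argument gives $e^{-1}m_Lv\leq m_Kv\leq e\,m_Lv$, which is the asserted ``hence also.''

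The argument is entirely elementary, so there is no serious obstacle; the only point requiring care is the bookkeeping in the last step, and it explains the precise form of the threshold. One must match the per-step deviation $\tfrac{1}{2(m+n+2)}$ against the number of steps $r\leq m$, and the choice $\tfrac{1}{m+n+2}$ (rather than an absolute constant) is exactly what keeps the accumulated product inside $[e^{-1},e]$ uniformly, no matter how deep the stopping tree is allowed to descend.
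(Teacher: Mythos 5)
Your proof is correct: the paper itself does not include a proof of this lemma (it defers to \cite{NV}, and to \cite{MoP} for the lifted Carleson sequences), and your argument---using maximality to make criterion (i) fail on every strict ancestor of $K$, converting $|\Delta_{K_{j+1}}u|/m_{K_{j+1}}u < 1/(m+n+2)$ into the bound $|m_{K_j}u/m_{K_{j+1}}u - 1|\leq 1/(2(m+n+2))$, and telescoping over at most $m$ generations to land in $[e^{-1},e]$---is exactly the standard argument from those references. The bookkeeping (at most $m<m+n+2$ factors, each within $1/(2(m+n+2))$ of $1$, with the elementary exponential estimates) is handled correctly, and the symmetric treatment of $v$ via the same failed criterion is precisely why the lemma's ``hence also'' holds.
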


Note that the roles of $m$ and $n$ can be interchanged and we get the family $\mathcal{ST}^n_L$ 
using the same stopping condition (i)  and condition (ii) replaced by $|K|=2^{-n}|L|$.
Notice that $\mathcal{ST}^m_L$ is a partition of $L$ in dyadic subintervals of length at least $2^{-m}|L|$.
The following lemma lifts a $w$-Carleson sequence to $m$-stopping time intervals with comparable intensity.
For the particular $m$-stopping time $\mathcal{ST}^m_L$  given by the stopping criteria (i) and (ii) in Lemma~\ref{liftlem}, 
and $w=1$, this appeared in   \cite{NV}.  
\begin{lemma}\label{corliftlemstop}
For each $L \in \mathcal{D}$ let $\mathcal{ST}^m_L$ be a partition of $L$ in dyadic subintervals
of length at least $2^{-m}|L|$. 
Assume $\{\nu_I \}_{I \in \mathcal{D}} $
is a $w$-Carleson sequence with intensity at most $A$, let $\nu^m_L :=
\sum _{K \in \mathcal{ST}^m_L} \nu_K$,  then $\{\nu_L^m\}_{L \in \mathcal{D}}$ is a $w$-Carleson sequence with
intensity at most $(m+1)A$.
\end{lemma}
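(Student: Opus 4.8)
The plan is to verify the $w$-Carleson condition for $\{\nu^m_L\}_{L\in\mathcal{D}}$ directly from the definition by fixing an arbitrary $J\in\mathcal{D}$, unfolding the definition of $\nu^m_L$, and then interchanging the order of summation so that each original interval $K$ is counted with the multiplicity with which it occurs as a stopping interval. First I would write
\[
\sum_{L\in\mathcal{D}(J)}\nu^m_L
=\sum_{L\in\mathcal{D}(J)}\ \sum_{K\in\mathcal{ST}^m_L}\nu_K .
\]
Because each $\mathcal{ST}^m_L$ consists of dyadic subintervals of $L$ of length at least $2^{-m}|L|$, every $K$ appearing on the right is itself a dyadic subinterval of $J$, so the double sum reorganizes as $\sum_{K\in\mathcal{D}(J)}\nu_K\,N(K)$, where $N(K):=\#\{L\in\mathcal{D}(J):K\in\mathcal{ST}^m_L\}$ counts the ancestors $L$ of $K$ that use $K$ as a stopping interval.

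The key combinatorial step is the multiplicity bound $N(K)\le m+1$. Indeed, if $K\in\mathcal{ST}^m_L$ then $K\subseteq L$ and $|L|\le 2^m|K|$, so $L$ must be one of the dyadic ancestors of $K$ lying within $m$ generations above it (including $K$ itself when $\mathcal{ST}^m_K=\{K\}$); there are at most $m+1$ such intervals. Since the entries $\nu_K$ are nonnegative, I may replace $N(K)$ by this upper bound regardless of whether $K$ is actually a stopping interval for each ancestor, obtaining
\[
\sum_{L\in\mathcal{D}(J)}\nu^m_L
\le (m+1)\sum_{K\in\mathcal{D}(J)}\nu_K .
\]

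Finally I would invoke the hypothesis that $\{\nu_I\}_{I\in\mathcal{D}}$ is a $w$-Carleson sequence with intensity at most $A$, which gives $\sum_{K\in\mathcal{D}(J)}\nu_K\le A\,|J|\,m_Jw$. Combining this with the previous display and dividing by $|J|$ yields
\[
\frac{1}{|J|}\sum_{L\in\mathcal{D}(J)}\nu^m_L
\le (m+1)A\,m_Jw ,
\]
which is precisely the assertion that $\{\nu^m_L\}_{L\in\mathcal{D}}$ is $w$-Carleson with intensity at most $(m+1)A$.

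I do not expect a serious obstacle here: the argument is essentially a summation-order interchange together with a geometric count. The only point requiring care is the multiplicity estimate $N(K)\le m+1$, which relies on the defining constraint $|K|\ge 2^{-m}|L|$ controlling how far above $K$ an admissible $L$ can sit, and on the nonnegativity of the sequence, which is what licenses passing from the exact count to its uniform upper bound.
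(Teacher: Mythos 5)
Your proof is correct: the interchange of summation, the multiplicity bound $N(K)\le m+1$ (any $L$ with $K\in\mathcal{ST}^m_L$ must be a dyadic ancestor of $K$ within $m$ generations, $K$ itself included), and the final appeal to the Carleson hypothesis on $\{\nu_I\}_{I\in\mathcal{D}}$ give exactly the claimed intensity $(m+1)A$. The paper does not spell out a proof of this lemma (it defers to \cite{MoP}), and the argument there is the same double-counting argument you use, so your proposal matches the intended proof in both substance and approach.
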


For proofs you can see \cite{MoP}.


\subsection{Auxiliary quantities}

For a weight $v$, and  a locally integrable function $\phi $ we define the following quantities,
\begin{align}
 P^m_L \phi & := \sum_{I \in \mathcal{D}_m(L)} \; |\langle \phi, h_I
\rangle| \sqrt{|I|/|L|},\label{def:PmL}\\
S^{v,m}_L \phi &:= \sum_{J \in \mathcal{D}_m(L)} |\langle \phi,h_J^v
\rangle_v| \sqrt{m_Jv}{\sqrt{{|J|}/{|L|}}}, \label{def:SvmLphi}\\
R^{v,m}_L \phi &:= \sum_{J \in \mathcal{D}_m(L)} \frac{|\Delta_J
v|}{m_Jv} m_J(|\phi|v)\;{|J|}/{\sqrt{|L|}},\label{def:RvmLphi}
\end{align}
Let $w\in A_q^d$, $\mathcal{ST}^m_L$ be an $m$-stopping time family of subintervals of $L$, $0<\alpha<\max\{1/2,1/2(q-1)\}$, 
 and  $\{\mu_K^q=\mu_K^{q,\alpha}\}_{K\in\mathcal{D}}$ be the Carleson sequence  with intensity  $C_{\alpha}[w]_{A^d_q}$  defined in Lemma~\ref{Apalphalemma}.
For each  $m>0$, we introduce another  sequence $\{\mu^m_L\}$, which is Carleson by Lemma~\ref{corliftlemstop}:


\[\mu^m_L := \sum_{K \in \mathcal{ST}^m_L} \mu^q_K \quad
 \mbox{with intensity}\quad C_{\alpha}(m+1)[w]_{A^d_q}.\]


 We will use the
following estimates for $S^{v,m}_L \phi$ and $R^{v,m}_L \phi$, where $1<p<2$ will be dictated by the proof of the theorem.
\begin{equation}
S^{v,m}_L \phi  \leq \Big( \sum_{J \in \mathcal{D}_m(L)} |\langle
\phi ,h_J^{v} \rangle_{v}|^2 \Big)^{\frac{1}{2}} (m_Lv)^{\frac{1}{2}},
\label{Sestpar} 
\end{equation}
\begin{equation}
R^{v,m}_L \phi  \leq  C\, C^n_m
(m_L v^{\frac{-1}{q-1}})^{\frac{-(q-1)}{2}}(m_Lv)^{\frac{1}{2}} \inf_{x \in
L} \big (M_{\wi}(|g|^p)(x)\big )^{\frac{1}{p}} \sqrt{\mu^m_L}, \label{Restpar}
\end{equation}
 See \cite{NV} for the proof when $q=2$, slight modification of their argument gives the estimate for $R^{v,m}_L\phi$.
Estimating $P_L^n \phi $ is very simple:
\begin{equation}\label{Ppar}
(P_L^m \phi)^2  
\leq \sum_{I \in \mathcal{D}_m(L)} {|I|}/{|L|} \sum_{I \in \mathcal{D}_m(L)} |\langle \phi ,h_I \rangle|^2 
= \sum_{I \in \mathcal{D}_m(L)} |\langle \phi ,h_I \rangle|^2. 
\end{equation}

%
%
%

\begin{remark}
In \cite{NV1}, Nazarov and Volberg extend the results that they had
in \cite{NV} for Haar shifts to  metric spaces with geometric
doubling. Following the same modifications in the argument made from
\cite{NV} to \cite{NV1}, one could obtain the same result as 
in Theorem~\ref{sufcondhaarmult} on a metric space with geometric doubling,
see \cite{Mo1}.
\end{remark}

\section{Haar Multipliers}

For a weight $w$, $t\in\mathbb{R}$, and $m,n\in \mathbb{N}$,  a  {\em $t$-Haar multiplier of complexity $(m,n)$} is the
operator defined as
\begin{equation}
 T^{m,n}_{t,w} f (x) := \sum_{L \in \mathcal{D}} \sum _{I
\in \mathcal{D}_n(L); J \in \mathcal{D}_m(L)} \frac{\sqrt{|I|\,|J|}}{|L|}
\bigg(\frac{w(x)}{m_L w}\bigg)^t \langle f,h_I\rangle h_J(x).
\end{equation}

In \cite{MoP} it is shown that $w\in C_{2t}^d$ is a necessary condition for boundedness of $T^{m,n}_{w,t}$ in $L^2(\BBR )$.
It is also shown that the $C^d_{2t}$-condition is sufficient  for a $t$-Haar multiplier 
of complexity $(m,n)$ to be bounded in $L^2(\mathbb{R} )$ for most $t$;  this was proved in \cite{KP} for the case $m=n=0$. 
Here we are concerned not only with the boundedness but also with the dependence of the operator norm
on the $C^d_{2t}$-constant . For $T^t_w$ and $t=1, \pm 1/2$
this was studied in \cite{P2}.  The first author \cite{Be} was able to obtain estimates, under the additional condition on the weight
$w^{2t}\in A^d_{q}$ for some $q>1$,  for $T_w^t$ and for all $t\in\mathbb{R}$.  Her 
results were generalized for $T_{w,t}^{m,n}$ for all $t$ when $w^{2t}\in A^d_2$,   see \cite{MoP}. We will show that:

\begin{theorem}\label{sufcondhaarmult}
Let $t$ be a real number and $w$ a weight 
  such that $w^{2t}\in A_q^d$ for some $q>1$ (i.e. $w^{2t}\in A_{\infty}^d$), then
$$\|T^{m,n}_{t,w}f \|_{2} \leq C_q(m+n+2)^3 [w]^{\frac{1}{2}}_{C^d_{2t}}[w^{2t}]^{\frac{1}{2}}_{A^d_q}\|f\|_2.$$
\end{theorem}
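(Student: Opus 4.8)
The plan is to argue by duality, estimating $|\langle T^{m,n}_{t,w}f,g\rangle|$ for $g\in L^2$ and aiming for the bound $C_q(m+n+2)^3[w]_{C^d_{2t}}^{1/2}[w^{2t}]_{A^d_q}^{1/2}\|f\|_2\|g\|_2$. I set $v=w^{2t}$ and $\phi=g\,w^{-t}$, so that $\|\phi\|_{L^2(v)}=\|g\|_2$ and $g\,w^t=\phi\,v$. Expanding the bilinear form, pulling the scalar $(m_Lw)^{-t}$ out, and using that the double sum over $I\in\mathcal{D}_n(L)$, $J\in\mathcal{D}_m(L)$ factors as a product of an $I$-sum and a $J$-sum, I first reach
\[ |\langle T^{m,n}_{t,w}f,g\rangle| \le \sum_{L\in\mathcal{D}} \frac{1}{(m_Lw)^t}\, P^n_L f\cdot \frac{1}{\sqrt{|L|}}\sum_{J\in\mathcal{D}_m(L)}\sqrt{|J|}\,|\langle g\,w^t,h_J\rangle|, \]
with $P^n_Lf$ as in \eqref{def:PmL}. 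The $C^d_{2t}$-condition enters exactly here: since $m_Lv\le[w]_{C^d_{2t}}(m_Lw)^{2t}$, one has $(m_Lw)^{-t}\le[w]_{C^d_{2t}}^{1/2}(m_Lv)^{-1/2}$, which is where the factor $[w]_{C^d_{2t}}^{1/2}$ is produced (and where all $t\in\mathbb{R}$ are treated uniformly, $\phi$ and $v$ being genuine weights for every sign of $t$).

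Next I decompose the inner Haar coefficient with the weighted Haar functions for $v$. Applying Proposition~\ref{whaarbasis}, $h_J=\alpha^v_Jh^v_J+\beta^v_J\chi_J/\sqrt{|J|}$, together with the bounds $|\alpha^v_J|\le\sqrt{m_Jv}$ and $|\beta^v_J|\le|\Delta_Jv|/m_Jv$, the normalized $J$-sum splits into precisely the two quantities $S^{v,m}_L\phi$ and $R^{v,m}_L\phi$ of \eqref{def:SvmLphi} and \eqref{def:RvmLphi}. Thus the problem reduces to bounding $[w]_{C^d_{2t}}^{1/2}\sum_L P^n_Lf\,(m_Lv)^{-1/2}\big(S^{v,m}_L\phi+R^{v,m}_L\phi\big)$ by $C_q(m+n+2)^3[w^{2t}]^{1/2}_{A^d_q}\|f\|_2\|g\|_2$.

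The $S$-term is essentially free. By \eqref{Sestpar}, $(m_Lv)^{-1/2}S^{v,m}_L\phi\le(\sum_{J\in\mathcal{D}_m(L)}|\langle\phi,h^v_J\rangle_v|^2)^{1/2}$, while \eqref{Ppar} gives $P^n_Lf\le(\sum_{I\in\mathcal{D}_n(L)}|\langle f,h_I\rangle|^2)^{1/2}$. A Cauchy--Schwarz over $L$, followed by the observation that each $I$ (resp. $J$) has a unique $n$-th (resp. $m$-th) ancestor, collapses the double sums into $\|f\|_2$ and, by orthonormality of $\{h^v_J\}$ in $L^2(v)$, into $\|\phi\|_{L^2(v)}=\|g\|_2$. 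This contributes $[w]_{C^d_{2t}}^{1/2}\|f\|_2\|g\|_2$, with no dependence on $q$ or on the complexity.

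The $R$-term is the crux, and I expect it to be the main obstacle. Using \eqref{Restpar} and a Cauchy--Schwarz over $L$, the first factor is again $\|f\|_2$ and the second is $\sum_L (m_Lv^{-1/(q-1)})^{-(q-1)}\mu^m_L\,\inf_L F$, where $F$ is the relevant power of the $v$-weighted maximal function of $\phi$ and $\{\mu^m_L\}$ is the lifted sequence of Lemma~\ref{corliftlemstop}, of intensity $C_\alpha(m+1)[w^{2t}]_{A^d_q}$ coming from Lemma~\ref{Apalphalemma}. The decisive step is the $A_p$-Little Lemma~\ref{litlem} with exponent $q$: it turns $\{\mu^m_L/(m_Lv^{-1/(q-1)})^{q-1}\}$ into a $v$-Carleson sequence of intensity $4C_\alpha(m+1)[w^{2t}]_{A^d_q}$. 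The Weighted Carleson Lemma~\ref{weightedCarlesonLem} then converts the sum into the weighted integral $\int F\,v\,dx$, and the weighted maximal bound \eqref{bddmaxfct} (with a fixed $1<p<2$, making $M_v$ bounded on $L^{2/p}(v)$) returns $\|g\|_2^2$; taking square roots yields $[w^{2t}]^{1/2}_{A^d_q}$. The hard part is the bookkeeping that simultaneously extracts only the square root of $[w^{2t}]_{A^d_q}$ (made possible by the $\sqrt{\mu^m_L}$ in \eqref{Restpar} together with the single application of the $A_p$-Little Lemma) and keeps the complexity dependence polynomial: collecting the constant in \eqref{Restpar}, the $\sqrt{m+1}$ from the lift, and the stopping-time threshold $1/(m+n+2)$ of Lemma~\ref{liftlem} produces the factor $(m+n+2)^3$.
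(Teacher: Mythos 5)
Your proposal follows the paper's own proof step for step: duality, the weighted Haar decomposition of $h_J$ producing the $S$- and $R$-terms, estimate \eqref{Sestpar} with \eqref{Ppar} for the first, and \eqref{Restpar} combined with Lemma~\ref{Apalphalemma}, Lemma~\ref{corliftlemstop}, the $A_p$-Little Lemma~\ref{litlem}, the Weighted Carleson Lemma~\ref{weightedCarlesonLem}, and the maximal bound \eqref{bddmaxfct} for the second. But there is one genuine error, and it sits exactly at the point you call the crux: the exponent $p$ cannot be ``a fixed $1<p<2$''. In the paper $p$ is chosen depending on the complexity, $p=2-(m+n+2)^{-1}$, and this is not a technicality. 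The estimate \eqref{Restpar} is stated with constant $C\, C^n_m$ only for this $p$ (the paper says $p$ ``will be dictated by the proof of the theorem''), and its proof requires it: one must pass from $v$-averages over stopping intervals $K$ with $|K|\geq 2^{-m}|L|$ to $\inf_{x\in L}\big(M_v(|\phi|^p)\big)^{1/p}$, and after the $\ell^2$-type summation over $K\in\mathcal{ST}^m_L$ this costs a factor of order $\big(v(L)/v(K)\big)^{2/p-1}\sim 2^{m(2/p-1)}$. For a fixed $p<2$ that factor is exponential in $m$; it is bounded by an absolute constant precisely when $2/p-1\lesssim (m+n+2)^{-1}$, which is what the choice $p=2-(m+n+2)^{-1}$ guarantees.

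The price of that choice is that the operator norm of $M_{v}$ on $L^{2/p}(v)$ in \eqref{bddmaxfct} is $(2/p)'=2/(2-p)=2(m+n+2)$, not $O(1)$ as in your accounting --- and this is where the third power of $(m+n+2)$ in the theorem comes from. Your bookkeeping (``constant in \eqref{Restpar}, $\sqrt{m+1}$ from the lift, stopping-time threshold of Lemma~\ref{liftlem}'') both double-counts the stopping threshold (it is the \emph{source} of the $C^n_m$ in \eqref{Restpar}, not an independent factor) and omits the $(2/p)'$ factor entirely. So, as written, your argument either hides an exponentially large constant inside \eqref{Restpar} or cannot produce the claimed $(m+n+2)^3$. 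The repair is exactly the paper's move: fix $p=2-(m+n+2)^{-1}$ at the outset, invoke \eqref{Restpar} for that $p$, and collect one factor $C^n_m$ from \eqref{Restpar}, a factor $\sqrt{m+1}$ from the lifted Carleson intensity, and a factor $(2/p)'=2C^n_m$ from \eqref{bddmaxfct}.
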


Using Lemmas~\ref{lem:ApRHq} and~\ref{lem:Csvsws} we can refine the result as follows, where $C^n_m=n+m+2$.

\begin{theorem}\label{thm:refinement} Let $t\in\BBR$, $w\in C^{2t}$ then
\begin{itemize}
\item[(i)] If $0< 2t <1$ and $w\in A_p^d$ then
$$\|T^{m,n}_{t,w}f \|_{2} \leq C_p(C^n_m)^3 [w^{2t}]^{\frac{1}{2}}_{A^d_p}\|f\|_2\leq C_p(C^n_m)^3  [w]^{t}_{A^d_p}\|f\|_2 .$$
\item[(ii)] If $t>1$ and $w\in A_p^d$ then if $q=2t(p-1)+1$
$$\|T^{m,n}_{t,w}f \|_{2} \leq C_p(C^n_m)^3 [w]^{\frac{1}{2}}_{C^d_{2t}}[w^{2t}]^{\frac{1}{2}}_{A^d_{q}}\|f\|_2\leq C_p(C^n_m)^3 [w]_{C^d_{2t}}[w]_{A_p^d}^t .$$
\item[(iii)] If $t<0$ then 
$$\|T^{m,n}_{t,w}f \|_{2} \leq C(C^n_m)^3 [w]_{C^d_{2t}}\|f\|_2 =   C(C^n_m)^3 [w]^{-2t}_{A^d_{1-1/2t}}\|f\|_2.$$
\end{itemize}
\end{theorem}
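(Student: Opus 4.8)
The plan is to obtain Theorem~\ref{thm:refinement} as a direct consequence of Theorem~\ref{sufcondhaarmult}: in each of the three ranges of $t$ I will verify that $w^{2t}$ belongs to a suitable $A^d_q$ class, apply Theorem~\ref{sufcondhaarmult} with that $q$, and then re-express the product $[w]^{1/2}_{C^d_{2t}}[w^{2t}]^{1/2}_{A^d_q}$ in terms of the characteristics appearing in the claimed estimates. All the characteristic comparisons I need are supplied by the structural dictionary of Section~\ref{preliminaries} relating the $C^d_{2t}$, $RH^d_{2t}$ and $A^d_p$ classes, together with Lemmas~\ref{lem:ApRHq} and~\ref{lem:Csvsws}; no further analysis of the operator $T^{m,n}_{t,w}$ is required, since that analysis is already packaged in Theorem~\ref{sufcondhaarmult}, from which the factor $(C^n_m)^3$ is inherited unchanged.

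For \emph{Case (i)}, $0<2t<1$, the condition $w\in C^d_{2t}$ holds automatically with $[w]_{C^d_{2t}}=1$ by H\"older's inequality. From $w\in A^d_p$ and $0\le 2t\le 1$, the first item of Lemma~\ref{lem:ApRHq} gives $w^{2t}\in A^d_p$ with $[w^{2t}]_{A^d_p}\le[w]^{2t}_{A^d_p}$; taking $q=p$ in Theorem~\ref{sufcondhaarmult} and using $[w]^{1/2}_{C^d_{2t}}=1$ yields the first inequality, and $[w^{2t}]^{1/2}_{A^d_p}\le[w]^{t}_{A^d_p}$ the second. For \emph{Case (ii)}, $t>1$, I will use that $w\in C^d_{2t}$ is exactly $w\in RH^d_{2t}$ with $[w]_{RH^d_{2t}}=[w]^{1/(2t)}_{C^d_{2t}}$; the second item of Lemma~\ref{lem:ApRHq} with $s=2t$ then places $w^{2t}$ in $A^d_q$ for $q=2t(p-1)+1$ and gives $[w^{2t}]_{A^d_q}\le[w]^{2t}_{RH^d_{2t}}[w]^{2t}_{A^d_p}=[w]_{C^d_{2t}}[w]^{2t}_{A^d_p}$. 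Feeding this $q$ into Theorem~\ref{sufcondhaarmult} produces the first inequality, and substituting the bound on $[w^{2t}]_{A^d_q}$ collapses $[w]^{1/2}_{C^d_{2t}}[w^{2t}]^{1/2}_{A^d_q}$ to $[w]_{C^d_{2t}}[w]^{t}_{A^d_p}$, which is the second.

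For \emph{Case (iii)}, $t<0$, the dictionary gives $w\in C^d_{2t}$ if and only if $w\in A^d_{1-1/(2t)}$, with $[w]_{C^d_{2t}}=[w]^{-2t}_{A^d_{1-1/(2t)}}$, while the third item of Lemma~\ref{lem:Csvsws} puts $w^{2t}$ in $A^d_q$ with $q=1-2t$. The one genuine computation is to check that here $[w^{2t}]_{A^d_q}=[w]_{C^d_{2t}}$: setting $p=1-1/(2t)$, so that $-1/(p-1)=2t$ and the dual exponent is $p'=1-2t=q$, the duality identity of the third item of Lemma~\ref{lem:ApRHq}, namely $[w]_{A^d_p}=[w^{2t}]^{p-1}_{A^d_q}$, combined with $[w]_{C^d_{2t}}=[w]^{-2t}_{A^d_p}$, gives exactly $[w^{2t}]_{A^d_q}=[w]_{C^d_{2t}}$. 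Hence Theorem~\ref{sufcondhaarmult} yields $[w]^{1/2}_{C^d_{2t}}[w^{2t}]^{1/2}_{A^d_q}=[w]_{C^d_{2t}}$, the asserted linear bound, and rewriting through $[w]_{C^d_{2t}}=[w]^{-2t}_{A^d_{1-1/(2t)}}$ gives the closing equality.

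I do not expect a genuine analytic obstacle here, since the operator-level work is entirely contained in Theorem~\ref{sufcondhaarmult}; the only care needed is bookkeeping, that is, choosing the right $q$ in each regime and tracking exponents so that the characteristics recombine precisely as stated, the most delicate instance being the clean collapse $[w^{2t}]_{A^d_q}=[w]_{C^d_{2t}}$ in Case (iii). I will also note that the constant $C_q$ of Theorem~\ref{sufcondhaarmult}, with $q$ determined by $p$ and the fixed $t$, reduces to a $C_p$ in Cases (i)--(ii) and to an absolute constant in Case (iii), matching the statement.
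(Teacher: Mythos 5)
Your proposal is correct and takes essentially the same route as the paper: the paper's proof of Theorem~\ref{thm:refinement} consists exactly of invoking Lemma~\ref{lem:Csvsws} (built on Lemma~\ref{lem:ApRHq}) to place $w^{2t}$ in the appropriate $A_q^d$ class in each of the three regimes and then applying Theorem~\ref{sufcondhaarmult}. Your write-up simply makes explicit the characteristic-exponent bookkeeping (such as the identity $[w^{2t}]_{A_q^d}=[w]_{C^d_{2t}}$ when $t<0$, $q=1-2t$) that the paper leaves implicit.
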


\begin{remark}\label{remark:constants}Throughout the proof a constant $C_q$ will be a numerical 
constant depending only on the parameter $q>1$ that may change from line to line.
\end{remark}

\begin{proof}[Proof of Theorem~\ref{thm:refinement}]
By Lemma~\ref{lem:Csvsws} if $w\in C^d_{2t}\cap A_{\infty}^d$ then there is $q>1$ such that $w^{2t}\in A_q^d$, matching
cases perfectly. Now use Theorem~\ref{sufcondhaarmult}.
\end{proof}

\begin{proof}[Proof of Theorem~\ref{sufcondhaarmult}]
Fix $f,\; g \in L^2(\mathbb{R})$.
By duality, it is enough to show that
\begin{equation*}
| \langle  T^{m,n}_{t,w}f,g \rangle | \leq C (m+n+2)^3
[w]^{\frac{1}{2}}_{C^d_{2t}}[w^{2t}]^{\frac{1}{2}}_{A^d_q}\|f\|_{2}
\|g\|_{2}.
\end{equation*}
The inner product on the left-hand-side can be expanded into a double sum, that we now estimate,
$$| \langle  T^{m,n}_{t,w}f,g \rangle | \leq \sum_{L\in\mathcal{D} }\sum_{I\in\mathcal{D}_n(L); J\in\mathcal{D}_m(L)} \frac{\sqrt{|I|\,|J|}}{|L|}
\frac{ |\langle f,h_I\rangle|}{(m_L w)^t}\;| \langle gw^t  , h_J \rangle|. $$


Write $h_J$ as a linear combination of a
weighted Haar function and a characteristic function, $h_J =
\alpha_J h^{w^{2t}}_J + \beta_J {\chi_J}/{\sqrt{|J|}}$, where
$\alpha_J = \alpha^{w^{2t}}_J$, $\beta_J = \beta^{w^{2t}}_J$,
$|\alpha_J|\leq \sqrt{m_Jw^{2t}}$, and $|\beta_J|\leq {|\Delta_J(w^{2t})|}/{m_Jw^{2t}}$.
Now break into two terms to be estimated separately so that,
$$| \langle  T^{m,n}_{t,w}f,g \rangle | \leq  \Sigma_1^{m,n} + \Sigma_2^{m,n},$$
where
\begin{eqnarray*}
\Sigma_1^{m,n} &:=& \sum_{L \in \mathcal{D}} \sum _{I \in \mathcal{D}_n(L); J \in \mathcal{D}_m(L)} \frac{\sqrt{|I|\,|J|}}{|L|}
\frac{\sqrt{m_{J}(w^{2t})}}{(m_L w)^t}  |\langle f,h_I\rangle| \; | \langle gw^{t}  , h^{w^{2t}}_J \rangle |,\\
 \Sigma_2^{m,n} &:=&  \sum_{L \in \mathcal{D}} \sum _{I \in \mathcal{D}_n(L); J \in \mathcal{D}_m(L)}
 \frac{|J| \sqrt{|I|} }{|L|(m_L w)^t} \frac{|\Delta_J (w^{2t})|}{m_{J}(w^{2t})} |\langle f,h_I\rangle| \; m_J( |g| w^t ) .
\end{eqnarray*}

Let $p = 2 - (C_n^m)^{-1}$ (note that $2>p>1$, in fact is getting closer to $2$ as $m$ and $n$ increase),
 and define as in (\ref{def:PmL}), (\ref{def:SvmLphi}) and (\ref{def:RvmLphi}), 
the quantities $P^m_L\phi$, $S_L^{v,n}\phi$ and $R_L^{v,n}\phi $, we will use here the case $v=w^{2t}$, for appropriate $\phi$s
and corresponding estimates. Note that $1<p<2$.

The  sequence $\{\eta_I\}_{I\in\mathcal{D}}$ where  
$$\eta_I := (m_I w^{2t}) \; (m_I
w^{\frac{-2t}{q-1}})^{(q-1)} \Big(
\frac{|\Delta_I(w^{2t})|^2}{|m_I w^{2t}|^2} + \frac{|\Delta_I(w^{{-2t}/{(q-1)}})|^2}{|m_I w^{{-2t}/{(q-1)}}|^2} \Big)|I|,$$
  is  a Carleson sequence with intensity $C_q[w^{2t}]_{A^d_q}$ by Lemma~\ref{Apalphalemma}. The sequence
  $\{\eta_L^m\}_{I\in\mathcal{D}}$ where
$${\eta^m_L := \sum_{I \in \mathcal{ST}_L^m} \eta_I},$$
 and the stopping time $\mathcal{ST}_L^m$ is defined as in Lemma~\ref{liftlem} but with respect to the weights $u=w^{2t}$, $v=w^{{-2t}/{(q-1)}}$,
is a Carleson sequence with  intensity $C_q(m+1)[w^{2t}]_{A^d_q}$ by  Lemma~\ref{corliftlemstop}, .

Observe that on the one hand $\langle gw^t,h^{w^{2t}}_J\rangle = \langle gw^{-t},h^{w^{2t}}_J \rangle_{w^{2t}} $, 
and  on the other $m_J(|g|w^t)=m_J(|gw^{-t}|w^{2t})$.
Therefore,
 $$\Sigma_3^{m,n}=\sum_{L \in \mathcal{D}} {(m_Lw)^{-t}}S^{w^{2t},n}_L
(gw^{-t}) \; P_L^mf ,$$
$$\Sigma_4^{m,n} = \sum_{L \in \mathcal{D}} {(m_Lw)^{-t}}
 R^{w^{2t},n}_L (gw^{-t}) \; P^m_L f.$$

\noindent 
Estimates (\ref{Sestpar})  and (\ref{Restpar}) hold for  $S^{w^{2t},m}_L (gw^{-t})$ and $R^{w^{2t},m}_L (gw^{-t})$  
with $v$ and $\phi$ replaced by $w^{2t}$ and $gw^{-t}$:
\begin{eqnarray*}
S^{w^{2t},n}_L (gw^{-t})
& \leq & (m_L w^{2t})^{\frac{1}{2}}\Big(\sum_{J \in \mathcal{D}_m(L)}
|\langle gw^{-t},h_J^{w^{2t}} \rangle_{w^{2t}} |^2\Big)^{\frac{1}{2}},\\
R^{w^{2t},n}_L (gw^{-t}) & \leq &  
C\,C^n_m (m_Lw^{2t})^{\frac{1}{2}}(m_L w^{\frac{2t}{q-1}})^{\frac{-(q-1)}{2}}F^{\frac12}(x)\sqrt{\eta^m_L},
\end{eqnarray*}
where $F(x)=\inf_{x \in L} \big ( M_{w^{2t}}(|gw^{-t}|^p)(x)\big )^{\frac{2}{p}}$.

\noindent{\bf Estimating $\Sigma_1^{m,n}$:}
Plug in the estimates for $S^{w^{2t},n}_L (gw^{-t})$ and $P_L^mf $,
observe that ${(m_Lw^{2t})^{\frac{1}{2}}}/{(m_L w)^t}\leq [w]_{C^d_{2t}}^{\frac12}$,
use the Cauchy-Schwarz inequality, to get,
\begin{eqnarray*}
\Sigma_1^{m,n} 
& \leq  & \sum_{L \in \mathcal{D}} [w]_{C^d_{2t}}^{\frac12} \Big(\sum_{J \in
\mathcal{D}_n(L)} |\langle
gw^{-t},h_J^{w^{2t}} \rangle_{w^{2t}}|^2 \Big)^{\frac{1}{2}} \Big( \sum_{I \in \mathcal{D}_m(L)} |\langle f,h_I \rangle|^2   \Big)^{\frac{1}{2}} \\
& \leq & [w]_{C^d_{2t}}^{\frac{1}{2}} \|f\|_2\Big(\sum_{L \in
\mathcal{D}} \sum_{J \in \mathcal{D}_n(L) }|\langle
gw^{-t},h_J^{w^{2t}} \rangle_{w^{2t}}|^2\Big)^{\frac{1}{2}}\\
& \leq & [w]_{C^d_{2t}}^{\frac{1}{2}}  \|f\|_{2} \| gw^{-t}\|_{L^2(w^{2t})}
\; = \;   [w]_{C^d_{2t}}^{\frac{1}{2}} \|f\|_{2}\|g\|_{2}.
\end{eqnarray*}

\noindent{\bf Estimating $\Sigma_2^{m,n}$:}
Plug in the estimates for $R^{w^{2t},n}_L (gw^{-t})$ and $P_L^mf $,
where $F(x)=\big ( M_{w^{2t}}(|gw^{-t}|^p)(x)\big )^{2/p}$, use the Cauchy-Schwarz inequality and 
 ${(m_Lw^{2t})^{\frac{1}{2}}}/{(m_L w)^t}\leq [w]_{C^d_{2t}}^{\frac12}$ to  get
\[
\Sigma_2^{m,n}  \leq C\, C^n_m [w]_{C^d_{2t}}^{\frac{1}{2}} \|f\|_2
\Big(\sum_{L \in \mathcal{D}}({\eta_L^m}/{(m_L w^{\frac{-2t}{q-1}})^{q-1}})
\inf_{x \in L} F(x) 
 \Big)^{\frac12}.
\]
Now using Weighted Carleson Lemma \ref{weightedCarlesonLem} with $\alpha_L={\eta_L^m}/{(m_Lw^{\frac{-2t}{q-1}})^{q-1}}$ 
(which by Lemma~\ref{litlem} is a $w^{2t}$-Carleson sequence with intensity no larger than 
$C_q(m+1)[w]_{A^d_q}$, $F(x)=\big ( M_{w^{2t}}|gw^{-t}|^p(x)\big )^{2/p}$, and $v=w^{2t}$,
\[
\Sigma_2^{m,n} \leq  C_q(C^n_m)^2 [w]_{C^d_{2t}}^{\frac{1}{2}} [w^{2t}]^{\frac{1}{2}}_{A^d_q} \|f\|_{2}
   \Big\|M_{w^{2t}}(|gw^{-t}|^p)\Big\|^{\frac{1}{p}}_{L^{\frac{2}{p}}(w^{2t})}.
\]
Using (\ref{bddmaxfct}), that is the boundedness of $M_{w^{2t}}$ in $L^{\frac2p}(w^{2t})$ for $2/p>1$,
\begin{eqnarray*}
\Sigma_2^{m,n} & \leq &C_q( C^n_m)^2 (2/p)' [w]_{C^d_{2t}}^{\frac{1}{2}} [w^{2t}]^{\frac{1}{2}}_{A^d_q} \|f\|_{2}\Big\| |gw^{-t}|^p \Big\|^{\frac{1}{p}}_{L^{\frac{2}{p}}(w^{2t})}\\
&\leq & C_q (C^n_m)^3 [w]_{C^d_{2t}}^{\frac{1}{2}}
[w^{2t}]^{\frac{1}{2}}_{A^d_q} \|f\|_{2}\| g\|_{2},
\end{eqnarray*}
Since $(2/p)'=2/(2-p)=2C^n_m$.
The theorem is proved.
\end{proof}


 \section*{Appendix}


 \begin{proof}[Proof of Lemma~\ref{litlem}]
We will show this inequality using a Bellman function type method.
Consider $B(u,v, l):= u - {1}/({v^{p-1}(1+l)}) $ defined on the domain $\mathbb{D}=
\{(u,v,l) \in \mathbb{R}^3, u >0, v>0, uv^{p-1}>1 \quad \text{and} \quad 0 \leq l \leq 1\}$. Note that
$\mathbb{D}$ is convex.
Note that
\begin{equation}\label{range}
   0 \leq B(u,v,l) \leq u \quad \quad \text{for all} \quad (u,v,l) \in \mathbb{D}
\end{equation}
and
\begin{equation}
 ({\partial B}/{\partial l})(u,v,l) \geq {1}/{4v^{p-1}} \quad \quad \text{for all} \quad (u,v,l) \in \mathbb{D}. \label{1}
\end{equation}
and also $-(du, dv, dl) d^2B (du,dv,dl)^t$ is non-negative because, it equals
\begin{eqnarray*}
 & & \hskip -.6in -(du, dv, dl)\left(
                                                  \begin{array}{ccc}
                                                    0 & 0 & 0 \\
                                                    0 & p(1-p)\frac{v^{-p-1}}{1+l} & (1-p)\frac{v^{-p}}{(l+1)^2}  \\
                                                    0 & (1-p)\frac{v^{-p}}{(l+1)^2} & -2\frac{v^{1-p}}{(l+1)^3} \\
                                                  \end{array}
                                                \right)
                         \left(
                          \begin{array}{c}
                            du \\
                            dv\\
                            dl\\
                          \end{array}
                        \right)
                         \nonumber \\
\quad\quad & & \hskip -.5in = \;  p(p-1)\frac{v^{-p-1}}{1+l}(du)^2 + 2(p-1)\frac{v^{-p}}{(l+1)^2}dudv + 2\frac{v^{1-p}}{(l+1)^3}(dv)^2 \geq 0, \label{2}
\end{eqnarray*}
since all terms are positive for $p>1$.

Now let us show that if $(u_-, v_-, l_-)$  and $(u_+,v_+,l_+) $ are
in $\mathbb{D}$ and we define $(u_0,v_0,l)\in \mathbb{D}$ where $l$ is
in between $l_+$ and $l_-$,  $u_0={(u_-+u_+)}/{2}$, 
$v_0={(v_- + v_+)}/{2}$, and  $l_0=(l_-+l_+)/2$,   then  
\begin{equation*}
    B(u_0,v_0,l) - {\big (B(u_-,v_-,l_-)+B(u_+,v_+, l_+\big )}/{2} \geq {|l-l_0|}/{4v^{p-1}_0}
\end{equation*}

Write for $-1\leq t \leq 1$,
$ u(t) ={[(t+1)u_+ + (1-t)u_-]}/{2}$, $ v(t)= {[(t+1)v_+ + (1-t)v_-]}/{2}$,
 and $ l(t) = {[(t+1)l_+ + (1-t)l_-]}/{2},$.
Define $ b(t):= B(u(t),v(t),l(t))$, then $b(0)=B(u_0,v_0,l_0)$,
$b(1) = B(u_+,v_+, l_+)$, $b(-1)=B(u_-,v_-,l_-)$, ${du}/{dt}={(u_+ -
u_-)}/{2}$, ${dv}/{dt}={(v_+ - v_-)}/{2}$ and ${dl}/{dt}={(l_+ - l_-)}/{2}$. 
If $(u_+,v_+, l_+)$ and
$(u_-,v_-, l_-)$ are in $\mathbb{D}$ then $(u(t),v(t),l(t))$ is also in
$\mathbb{D}$ for all $|t| \leq 1$, since $\mathbb{D}$ is convex. It
is a calculus exercise to show that
\begin{equation}\label{calculus}
b(0) - \frac{b(1)+b(-1)}{2} = \frac{-1}{2} \int^{1}_{-1}(1 -
|t|)b''(t)dt
\end{equation}
Also it is easy to check that
$
    -b''(t) = -\big (\frac{du}{dt}, \frac{dv}{dt}, \frac{dl}{dt} \big ) d^2B  \big (\frac{du}{dt}, \frac{dv}{dt}, \frac{dl}{dt} \big )^t.
$
By the Mean Value Theorem and \eqref{calculus},
\begin{eqnarray*}
    && \hskip -.8in  B(u_0,v_0,l) - \frac{B(u_-,v_-,l_-)+B(u_+,v_+, l_+)}{2}  \\
  \quad      & = & (l-l_0)\frac{\partial B}{\partial l}(u_0, v_0, l') - \frac{1}{2} \int^{1}_{-1}(1 -
|t|)b''(t)dt \geq \frac{l-l_0}{4v^{p-1}_0},
\end{eqnarray*}
where $l'$ is a point between $l$ and $l_0={(l_- + l_+)}/{2}$.

Now we can use the Bellman function argument. Let $u_+ = m_{J_+}w$,
$u_- = m_{J_-}w$, $v_+ = m_{J_+} w^{\frac{-1}{p-1}}$, $v_- =m_{J_-} v^{\frac{-1}{p-1}}$, 
$l_+ = \frac{1}{|J_+|Q}\sum_{I \in \mathcal{D}(J_+)} \lambda_I$ and $l_- = \frac{1}{|J_-|Q}\sum_{I \in \mathcal{D}(J_-)} \lambda_I$.
Thus $(u_-,v_-,l_-), (u_+,v_+,l_+) \in\mathbb{D}$ and 
$u_0= m_J w$, $v_0=m_J w^{\frac{-1}{p-1}}$, and $l_0 = \frac{1}{|J|Q}\sum_{I \in \mathcal{D}(J)} \lambda_I$.
Thus
$$ (u_0,v_0,l_0) - \big({(u_- + u_+)}/{2}, {(v_- + v_+)}/{2}, {(l_- + l_+)}/{2} \big)= \big(0,0, {\lambda_J}/{Q|J|}\big).$$
 Then we can run the usual induction on scale arguments using the properties of the Bellman function,
 \begin{align*}
 |J| m_J w &\geq |J| B(u_0, v_0, l_0) \\
&\geq |J|\frac{ B(u_+,v_+,l_+)}{2}+|J|\frac{ B(u_-,v_-,l_-)}{2} + {\lambda_J}/{4Q \big( m_J w^{\frac{-1}{p-1}}\big)^{p-1}}\\
 & = |J_+|B(u_+,v_+,l_+)+|J_-|B(u_-,v_-,l_-)+{\lambda_J}/{4Q \big(m_J w^{\frac{-1}{p-1}}\big)^{p-1}}
 \end{align*}
Iterating, we get
$$m_J w \geq \frac{1}{4Q|J|}\sum_{I \in \mathcal{D}(J)} \frac{\lambda_I}{(m_Iw^{{-1}/{p-1}})^{p-1}}.$$
\end{proof}

\end{document}